\newcommandx{\inbar}[2][1=]{\todo[linecolor=ForestGreen,backgroundcolor=ForestGreen!25,bordercolor=ForestGreen,#1]{#2}}
\newcommandx{\sarah}[2][1=]{\todo[linecolor=Violet,backgroundcolor=Violet!25,bordercolor=Violet,#1]{#2}}
\DeclareMathOperator{\col}{colim}
\DeclareMathOperator*{\hl}{holim}
\DeclareMathOperator{\Hom}{Hom}
\newcommand{\D}{\displaystyle}
\newcommand{\cC}{\mathcal{C}}
\newcommand{\sD}{\mathcal{D}}
\newcommand{\cE}{\mathcal{E}}
\newcommand{\cP}{\mathcal{P}}
\newcommand{\cX}{\mathcal{X}}
\newtheorem{thm}{Theorem}[section]
\newtheorem*{thm*}{Theorem}
\newtheorem{prop}[thm]{Proposition}
\newtheorem{cor}[thm]{Corollary}
\newtheorem{lemma}[thm]{Lemma}
\theoremstyle{definition}
\newtheorem{defn}[thm]{Definition}
\theoremstyle{definition}
\newtheorem{ex}[thm]{Example}
\theoremstyle{definition}
\newtheorem{rem}[thm]{Remark}
\theoremstyle{definition}
\theoremstyle{definition}
\newtheorem*{rem*}{Remark}
\newtheorem*{exA*}{Example}
\newtheoremstyle{TheoremForIntro} %%LABELS THEOREMS FOR INTRO WITH SAME NUMBER AS LATER APPEARS, PUT \ref{...} AS NAME OF THEOREM
        {.6em}{.6em}              %%% space between body and thm
        {\itshape}                      %%% Thm body font
        {}                              %%% Indent amount (empty = no indent)
        {\bfseries}                     %%% Thm head font
        {. }                             %%% Punctuation after thm head
        { }                             %%% Space after thm head
        {\thmname{#1}\thmnote{ \bfseries #3}}%%% Thm head spec
    \theoremstyle{TheoremForIntro}
    \newtheorem{TheoremIntro}[thm]{Theorem}
    \newtheorem{CorollaryIntro}[thm]{Corollary}
\newcommand{\itop}{\mathsf{isvt\text{-}Top}}
\newcommand{\topp}{\mathsf{Top}}
\newcommand{\ilink}[1]{\mathsf{M}_{\mathbf{H}}({#1})}
\newcommand{\imap}{\mathsf{Map_{isvt}}}
\newcommand{\emap}{\mathsf{Map_{eqvt}}}
\newcommand{\map}{\mathsf{Map}}
\newcommand{\itimes}{\overset{\mathsf{isvt}}{\times}}
\DeclareMathOperator*{\iprod}{\overset{\mathsf{isvt}}{\prod}}
\newcommand{\icell}{\mathcal{I}\text{-}\mathsf{cell}}
\newcommand{\icof}{\mathcal{I}\text{-}\mathsf{cof}}
\newcommand{\iinj}{\mathcal{I}\text{-}\mathsf{inj}}
\newcommand{\jinj}{\mathcal{J}\text{-}\mathsf{inj}}
\newcommand{\ipathW}{W^{[0,1]}}
\newcommand{\isusp}{S_{\mathcal{U}}}
\newcommand{\iloop}{\Omega_{\mathcal{U}}}
\newcommand{\Tot}{\mathsf{Tot}}
\DeclareMathOperator{\crep}{crep}
\title{An isovariant Blakers--Massey theorem}
\author{Inbar Klang and Sarah Yeakel}
\begin{document}

\begin{abstract}
     An isovariant map is an equivariant map between $G$-spaces which strictly preserves isotropy groups. In this paper, we lay the groundwork for the study of isovariant stable homotopy theory when $G$ is a finite group. We prove an isovariant Blakers--Massey theorem and its $n$-cubical generalization, define a suitable notion of suspension (by a trivial representation sphere) in the isovariant category, and prove an isovariant Freudenthal suspension theorem.

\end{abstract}

\maketitle

\section{Introduction}
Let $G$ be a finite group. If $X$ and $Y$ are compactly generated spaces with continuous left $G$-actions, an \emph{isovariant} map is an equivariant continuous function $f:X \to Y$ such that $G_x = G_{f(x)}$ for all $x \in X$. That is, an isovariant map preserves the $G$-action and strictly preserves isotropy subgroups.

\medskip

For example, consider the cyclic group with two elements, $C_2$. The one-point space $\ast$ has a trivial $C_2$-action and the unit disk $D^2$ can be given the $C_2$-action which reflects across the vertical axis. Maps from $\ast$ to $D^2$ with image in the $C_2$-fixed points are both equivariant and isovariant. Indeed, any equivariant map which is injective is also isovariant. The map $D^2 \to \ast$ is equivariant, but not isovariant.

\medskip

Questions of isovariant homotopy theory often arise when studying smooth manifolds with symmetries, for example, in equivariant surgery theory \cite{browderquinn}, in equivariant $h$-cobordism theory \cite{Luc,MMhcob}, and even in the classification of $G$-spaces \cite{palais}. In previous papers, the authors study isovariant homotopy theory, proving an isovariant Elmendorf's theorem (\cite{sayisvtelm}, Theorem 3.1), an isovariant Whitehead's theorem (\cite{KYfixed}, Theorem 3.10), and providing a complete obstruction for eliminating fixed points of isovariant maps up to isovariant homotopy (\cite{KYfixed}, Theorem 4.1). In this paper, we lay the groundwork for the study of isovariant stable homotopy theory.

\medskip

Two of the central fundamental results leading to the development of stable homotopy theory were the Blakers--Massey theorem \cite{BM} and the Freudenthal suspension theorem \cite{Fsusp}. In a stable category, homotopy pushout squares and homotopy pullback squares coincide; this is not true in the category of spaces. In modern terms, the Blakers--Massey theorem gives a measure of how close a homotopy pushout square is to being a homotopy pullback square, based on connectivity of the maps. For a concise history of the result, see Section 4.1 of \cite{munsonvolic}. More precisely,

\medskip

\textbf{Blakers--Massey theorem.} Suppose that the following is a homotopy pushout square of spaces:
\[\xymatrix{
Z \ar[r]^-v \ar[d]_-u & Y \ar[d]^-j \\
X \ar[r]^-i & W
}\]
If $u$ is $n$-connected and $v$ is $m$-connected, then the map from $Z$ to the homotopy pullback of $i$ and $j$ (also known as the cartesian gap map of the square) is $(n + m - 1)$-connected.

\medskip

Similarly, in a stable category, the suspension functor and the loop functor are inverses of each other. This again fails in the category of spaces, but the Freudenthal suspension theorem, which follows from the Blakers--Massey theorem, states that for a highly connected space, the suspension and loop functors are close to being inverses. More precisely, the theorem states that if $X$ is an $n$-connected cell complex, then the map $X \to \Omega \Sigma X$ is $(2n+1)$-connected. The Blakers--Massey theorem and the Freudenthal suspension theorem allowed for the development of the stable homotopy category and for its implementation using the category of spectra.

\medskip

Versions of the Blakers--Massey and Freudenthal suspension theorems hold in the equivariant setting as well. Hauschild first proved an equivariant Blakers--Massey with conclusions on representation-graded homotopy groups \cite[2.1]{Hau}, and Dotto proved a higher-dimensional genuine version which includes $G$-actions on the diagrams \cite[2.1.3]{Dot}. Equivariant suspension theorems were proven by Hauschild (\cite[2.2, 2.3]{Hau}), Namboodiri (\cite[2.3]{namFsusp}, recovered by Dotto in \cite[2.2.1]{Dot}), and Lewis (\cite[5.1]{lewis}). Many of these theorems involve suspension by representation spheres, which we do not define (yet) in the isovariant context. 

\medskip

In this paper, we prove an isovariant Blakers--Massey theorem (and its $n$-cubical generalization) for an isovariant notion of connectivity, analogous to equivariant connectivity, but using mapping spaces of isovariant simplices instead of mapping spaces of orbits.

\begin{TheoremIntro}[\ref{thm-IBM}, (\ref{thm-higher-isvt-BM})]
Suppose that the following is a homotopy pushout square in the isovariant category:
    \[\xymatrix{
Z \ar[r]^-v \ar[d]_-u & Y \ar[d]^-j \\
X \ar[r]^-i & W
}\]
If $u$ is $n_\bullet$-connected and $v$ is $m_\bullet$-connected, then the cartesian gap map of the square is $(n_\bullet + m_\bullet - 1)$-connected. Here $n_\bullet$ is a function on strictly increasing chains of subgroups of $G$, which are the isovariant analogues of orbits $G/H$.
\end{TheoremIntro}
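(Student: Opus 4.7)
The plan is to reduce the isovariant Blakers--Massey theorem to the classical Blakers--Massey theorem by probing the square with the mapping space functors $\imap(\sigma,-)$, where $\sigma$ ranges over isovariant simplices. By the isovariant Elmendorf theorem of \cite{sayisvtelm}, these mapping spaces jointly detect weak equivalences, and (essentially by definition) the $n_\bullet$-connectivity of $u$ amounts to the ordinary $n_\bullet(\sigma)$-connectivity of $\imap(\sigma,u)$ for each $\sigma$, with an analogous statement for $v$.

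The first key step is to show that for each isovariant simplex $\sigma$, the functor $\imap(\sigma,-)$ sends isovariant homotopy pushout squares to homotopy pushout squares of spaces. Since mapping spaces always preserve homotopy limits, once this is in place, applying $\imap(\sigma,-)$ to the square of the statement yields an ordinary homotopy pushout of spaces, and the natural map from $\imap(\sigma,Z)$ to the homotopy pullback of $\imap(\sigma,i)$ and $\imap(\sigma,j)$ is exactly $\imap(\sigma,-)$ applied to the isovariant cartesian gap map.

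The classical Blakers--Massey theorem then says that this levelwise gap map is $(n_\bullet(\sigma) + m_\bullet(\sigma) - 1)$-connected for each $\sigma$, which, by the definition of isovariant connectivity, is precisely the statement that the isovariant cartesian gap map is $(n_\bullet + m_\bullet - 1)$-connected. The higher cubical generalization should then follow by induction on the cubical dimension, splitting an $n$-cube as a map of $(n-1)$-cubes in the style of Goodwillie and applying the pushout--pullback gap estimate inductively; the $n_\bullet$-connectivity bookkeeping is formal once the binary case is established.

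The main obstacle is the first step: verifying that $\imap(\sigma,-)$ preserves isovariant homotopy pushouts. In the equivariant setting, the analogous commutation of fixed-point functors with equivariant homotopy pushouts is standard once one fixes a projective model structure and uses the freeness of the orbits $G/H$. In the isovariant setting this is more delicate, because ordinary colimits need not land in the isovariant category (the strict-isotropy condition can fail in a colimit of isovariant maps), and so the isovariant homotopy pushout must be modeled using specifically isovariant cells. Establishing this compatibility, most likely by working with a cellular model whose generating cofibrations are built from isovariant simplices so that $\imap(\sigma,-)$ carries a cofibration pushout to a cofibration pushout of spaces, will be the technical heart of the argument.
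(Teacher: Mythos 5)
Your overall strategy is the same as the paper's: apply $\imap(\Delta^{\mathbf{H}},-)$ for each chain $\mathbf{H}$, check that this takes the isovariant homotopy pushout square to a homotopy pushout square of spaces and the isovariant cartesian gap map to the classical cartesian gap map, and then invoke the classical Blakers--Massey theorem levelwise; the connectivity bookkeeping is indeed definitional. However, you have placed the technical weight on the wrong step. The commutation of $\imap(\Delta^{\mathbf{H}},-)$ with homotopy pushouts, which you identify as ``the technical heart,'' is quoted by the paper from Lemma 3.2 of \cite{sayisvtelm} and requires no new work here. What is \emph{not} automatic is your throwaway claim that ``mapping spaces always preserve homotopy limits.'' The functor $\imap(\Delta^{\mathbf{H}},-)$ is a right adjoint and so preserves strict limits, but to conclude anything about \emph{homotopy} pullbacks one must exhibit a model of the homotopy pullback in the isovariant category that this functor carries to a model of the homotopy pullback of spaces. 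The paper does this by introducing the isotropy-constant path space $\ipath W$ (paths $\gamma$ with $G_{\gamma(0)}=G_{\gamma(t)}$ for all $t$), proving that $P(i,j)=X\times_W \ipath W\times_W Y$ is an isovariant homotopy pullback (Proposition \ref{prop-hopb}, which requires verifying that $ev_0$ is a fibration in the elementary model structure), and checking via the cotensoring adjunction that $\ilink{\ipath W}\cong \map([0,1],\ilink{W})$. The ordinary free path space does not model the isovariant homotopy pullback, so this is a genuine point, not bookkeeping. Finally, for the $n$-cubical statement your proposed induction on cubical dimension is a different and harder route than the paper's: the paper instead establishes that $\ilink{-}$ commutes with all homotopy limits (via cosimplicial replacement and totalization) and then cites Goodwillie's higher Blakers--Massey theorem in $\topp$ levelwise, avoiding any need to re-derive the higher statement from the binary one.
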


The proof of this theorem relies on isovariant homotopy theory tools developed in \cite{sayisvtelm} and \cite{KYfixed}. An alternate proof would proceed by replacing the isovariant category with presheaves over the link orbit category, as in the isovariant Elmendorf theorem of \cite{sayisvtelm}. In this paper, we give a more concrete proof that remains in the category of isovariant spaces and uses explicit models for homotopy pushout and pullback in that category.

\medskip

As an application of the isovariant Blakers--Massey theorem, we prove an isovariant Freudenthal suspension theorem, although defining suspension in the isovariant category is far from straightforward.

\medskip

Suspension in topological spaces can be defined as a homotopy pushout along maps to the terminal object, the 1-point space.
\[\xymatrix{
X \ar[r] \ar[d] & {*} \\
{*}
}\]
Isovariant maps to the 1-point space are quite rare. In fact, no $G$-space is a terminal object in the category of $G$-spaces with isovariant maps, and therefore the papers \cite{sayisvtelm} and \cite{KYfixed} make use of a formal terminal object. 
We use $\itop$ and the term \textit{the isovariant category} to refer to the category of $G$-spaces and isovariant maps with a formal terminal object. 
However, any pushout in $\itop$ involving the formal terminal object results in the formal terminal object. For a useful definition of suspension, we make use of a homotopy terminal object. We identify a particular model for our homotopy terminal object: a complete $G$-universe $\mathcal{U}$, which is a countable direct sum of copies of the regular representation of $G$. We prove:

\begin{TheoremIntro}[\ref{prop-h-terminal}]
    The complete $G$-universe $\mathcal{U}$ is isovariantly weakly contractible.
\end{TheoremIntro}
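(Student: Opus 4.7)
The plan is to unpack isovariant weak contractibility as the vanishing of the isovariant homotopy groups of $\mathcal{U}$, equivalently as contractibility of the isovariant mapping spaces $\imap(K, \mathcal{U})$ for each isovariant test object $K$. The key input is that $\mathcal{U}$ carries the colimit topology, so any isovariant map from a compact $K$ into $\mathcal{U}$ factors through a finite-dimensional sub-representation $V^N \subset \mathcal{U}$, leaving infinitely many ``fresh'' blocks of $\mathcal{U}$ available to host auxiliary homotopies.

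The main obstacle is that the naive contracting homotopy $(x, t) \mapsto (1-t) f(x)$ fails to be isovariant: a point of isotropy $H \lneq G$ collapses to the origin at $t = 1$, which has isotropy $G$. The key maneuver is to replace contractions by \emph{rotations} between disjoint blocks of $\mathcal{U}$. Concretely, on $V^N \oplus V^N \subset \mathcal{U}$ one has the rotation
\[
R_t(v, w) = \bigl(\cos(\tfrac{\pi t}{2}) v - \sin(\tfrac{\pi t}{2}) w,\ \sin(\tfrac{\pi t}{2}) v + \cos(\tfrac{\pi t}{2}) w\bigr),
\]
for which a direct check---using invertibility of the $2 \times 2$ rotation matrix---shows $G_{R_t(v, w)} = G_v \cap G_w$ for every $t \in [0, 1]$. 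Hence $R_t$ is an isovariant isotopy from the identity to the block swap $(v, w) \mapsto (-w, v)$, and it can be used to move any map $f \colon K \to V^N$ isovariantly through disjoint blocks of $\mathcal{U}$ while preserving isotropy at every intermediate time.

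I would then build isovariant extensions of $f$ over cells by rotational interpolation: an isovariant $f \colon \partial D \to V^N$ extends over the disk by $(x, r) \mapsto \bigl(\cos(\tfrac{\pi r}{2}) f(x), \sin(\tfrac{\pi r}{2}) c(x)\bigr)$ into $V^N \oplus V^N \subset \mathcal{U}$, where $c$ is an auxiliary isovariant map chosen to match the isotropy data at the cone point; the infinite dimensionality of $\mathcal{U}$ guarantees that a compatible $c$ always exists. Assembling such extensions cell by cell over an isovariant CW-decomposition yields the desired nullhomotopies, and hence weak contractibility. A likely cleaner alternative, which I would pursue in parallel, is to invoke the isovariant Whitehead theorem of \cite[Theorem 3.10]{KYfixed} to reduce the statement to contractibility of certain fixed-point and linking diagrams associated to $\mathcal{U}$; each such diagram is either an infinite-dimensional vector space $\mathcal{U}^H$ or its complement of a countable union of proper subspaces, each of which is contractible. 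The delicate point in either approach is isotropy bookkeeping at cone points, which is precisely what the infinite dimensionality of $\mathcal{U}$ is designed to accommodate.
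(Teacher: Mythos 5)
Your main route is, at its core, the paper's own strategy: use compactness to factor a sphere of isovariant maps through a finite stage $\rho^{\oplus l}\subset\mathcal{U}$, then cone it off into a fresh block $\rho^{\oplus l}\oplus\rho$. Your rotation lemma is correct ($R_t$ is a $G$-equivariant linear isomorphism, so $G_{R_t(v,w)}=G_{(v,w)}=G_v\cap G_w$ for all $t$), and using $(\cos,\sin)$ weights in place of the paper's piecewise-linear ones is a cosmetic difference: in both cases the point is that at every parameter value at least one coordinate is a nonzero scalar multiple of a point with the correct \emph{exact} isotropy $L$ while the other is fixed by at least $L$, so the pair has exact isotropy $L$. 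The genuine gap is the auxiliary map $c$. You assert that ``the infinite dimensionality of $\mathcal{U}$ guarantees that a compatible $c$ always exists,'' but infinite dimensionality is only what buys you a fresh block; it does not produce an element of $\imap(\Delta^{\mathbf{H}},\rho)$ realizing the prescribed chain of exact isotropy groups at the cone point. That nonemptiness is the real content of the step, and it requires a construction: the paper writes down $\gamma(s)=s\sum_{h\in H}\chi_h+(1-s)\sum_{k\in K}\chi_k$ in the regular representation and verifies by hand that $\gamma(0)$ has exact isotropy $K$ and $\gamma(s)$ has exact isotropy $H$ for $s>0$. Since you propose to treat all chains $\mathbf{H}=\{H_0<\cdots<H_n\}$ directly (the paper instead first reduces, via the fibrant--cofibrant comparison with a cofibrant replacement $\mathcal{V}$ of the terminal object and the Whitehead machinery of \cite{KYfixed}, to chains of length at most $1$, and quotes \cite{MMhcob} for length $0$), you owe a version of this construction for arbitrary chains. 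It is true and not hard --- a barycentric combination of the elements $\sum_{h\in H_i}\chi_h$ works --- but without it the proof is incomplete at exactly its key point.

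Your ``cleaner alternative'' does not work as stated. The higher linking spaces $\imap(\Delta^{H<K},\mathcal{U})$ are not fixed-point subspaces or complements of unions of subspaces of a vector space: they are spaces of paths $[0,1]\to\mathcal{U}$ with exact-isotropy constraints on the endpoints and interior ($\gamma(0)\in\mathcal{U}_K$, $\gamma(s)\in\mathcal{U}_H$ for $s>0$), and for longer chains, spaces of isovariant simplices. Only the length-zero links $\mathcal{U}_H=\mathcal{U}^H\setminus\bigcup_{K>H}\mathcal{U}^K$ have the form you describe (a complement of finitely many infinite-codimension subspaces, which is indeed the content of \cite[4.6]{MMhcob}). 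So the alternative at best re-proves the easy half of what is needed, and the coning argument --- with the explicit $c$ supplied --- remains necessary for the higher links.
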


As a corollary, we obtain that every isovariant cell complex admits a unique-up-to-isovariant-homotopy isovariant map to $\mathcal{U}$. We are then able to define isovariant suspension $\isusp X$ of an isovariant cell complex $X$ as a homotopy pushout along maps to $\mathcal{U}$.
\[\xymatrix{
X \ar[r] \ar[d] & \mathcal{U} \\
\mathcal{U}
}\]
We are then also able to define an isovariant loop space functor, $\iloop$, as a homotopy pullback. Now equipped with useful notions of isovariant suspension and isovariant loop spaces, a straightforward application of our isovariant Blakers--Massey theorem (Theorem \ref{thm-IBM}) yields an isovariant Freudenthal suspension theorem:
\begin{CorollaryIntro}[\ref{thm-isvt-Freudenthal}]
    Let $X$ be an isovariantly $n_\bullet$-connected isovariant cell complex. Then the cartesian gap map $X \to \iloop \isusp X$ is isovariantly $(2n_\bullet + 1)$-connected.
\end{CorollaryIntro}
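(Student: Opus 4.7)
The plan is to deduce this corollary directly from the isovariant Blakers--Massey theorem (Theorem \ref{thm-IBM}) applied to the defining pushout square for $\isusp X$, using isovariant weak contractibility of $\mathcal{U}$ (Theorem \ref{prop-h-terminal}) to convert the hypothesis on $X$ into a connectivity hypothesis on the two structure maps. By construction, the square
\[\xymatrix{
X \ar[r] \ar[d] & \mathcal{U} \ar[d] \\
\mathcal{U} \ar[r] & \isusp X
}\]
is a homotopy pushout in the isovariant category, and $\iloop \isusp X$ is by definition the homotopy pullback of the cospan obtained by removing $X$ from the upper-left corner. Thus the map $X \to \iloop \isusp X$ under consideration is precisely the cartesian gap map of this square, so Theorem \ref{thm-IBM} applies as soon as we can bound the connectivity of the two maps $X \to \mathcal{U}$.

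The key observation is a connectivity shift: since $\mathcal{U}$ is isovariantly weakly contractible, the long exact sequence for the isovariant mapping spaces of standard simplices (equivalently, the isovariant homotopy fiber computation) shows that a map $X \to \mathcal{U}$ is isovariantly $(n_\bullet + 1)$-connected whenever $X$ is isovariantly $n_\bullet$-connected. I would verify this by observing that the isovariant homotopy fiber of $X \to \mathcal{U}$ is isovariantly equivalent to $X$ itself (shifted by one in the usual long exact sequence of a fibration sense), exactly as in the classical calculation that $X \to \ast$ is $(n+1)$-connected for $n$-connected $X$. Both of the structure maps $X \to \mathcal{U}$ in the pushout square above therefore have isovariant connectivity at least $n_\bullet + 1$.

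Plugging into Theorem \ref{thm-IBM} with $u$ and $v$ both $(n_\bullet + 1)$-connected, the cartesian gap map is
\[(n_\bullet + 1) + (n_\bullet + 1) - 1 = 2n_\bullet + 1\]
connected, as claimed.

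The main obstacle is not the Blakers--Massey input, which is already the work of Theorem \ref{thm-IBM}, but rather the verification of the connectivity shift in the isovariant setting. One must be careful that ``isovariantly $n_\bullet$-connected'' (defined via isovariant mapping spaces from standard isovariant simplices) behaves like classical connectivity with respect to maps into a contractible target; this amounts to checking that the long exact sequence of isovariant homotopy groups associated to the isovariant homotopy fiber holds in the expected degrees. Given the framework from \cite{sayisvtelm} and \cite{KYfixed}, this is essentially formal, but it is the one place where care is needed to make sure the numerical indexing on $n_\bullet$ matches up correctly across isotropy strata.
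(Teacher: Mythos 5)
Your proposal is correct and follows essentially the same route as the paper: apply Theorem \ref{thm-IBM} to the defining homotopy pushout square for $\isusp X$, after showing that any isovariant map $X \to \mathcal{U}$ is $(n_\bullet+1)$-connected. The connectivity shift you flag as the delicate point is exactly Proposition \ref{prop-conn-to-terminal}, which the paper verifies directly link-by-link (since $\ilink{\mathcal{U}}$ is weakly contractible and connectivity is defined on links, the check reduces to ordinary spaces), so no isovariant long exact sequence machinery is needed.
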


\begin{rem}
   Our notion of isovariant suspension corresponds to suspending by a trivial one-dimensional representation sphere. In equivariant stable homotopy theory, suspension by any finite-dimensional representation sphere plays a crucial role. For example, it is necessary to suspend by representation spheres and grade (co)homology on representations in order to prove an equivariant Poincaré duality theorem. In future work, we plan to study the possibility of isovariant suspension by representation spheres, and investigate its role in isovariant (stable) homotopy theory as well as connections to equivariant homotopy theory and equivariant manifold theory.  This may involve a generalization of our isovariant Blakers--Massey theorem, along the lines of Dotto's equivariant version \cite{Dot}.
\end{rem}

\subsection*{Structure of the paper.} In section 2, we review necessary background on isovariance and isovariant homotopy theory. In section 3, 
we give a model for homotopy limits and homotopy pushouts in the isovariant category.
In section 4, 
we reprove Hauschild's equivariant Blakers--Massey theorem \cite{Hau}, using techniques that apply to the isovariant case. We then prove the isovariant Blakers--Massey theorem and its $n$-cubical generalization. In section 5, we define the isovariant suspension $S_\mathcal{U}$ and prove the isovariant Freudenthal suspension theorem.

\subsection*{Acknowledgments.} We would like to thank Kate Ponto and Cary Malkiewich for many illuminating discussions on isovariance, and for their support during this project. We would like to thank Cary Malkiewich and Mona Merling for a useful discussion on Proposition 4.6 of their paper \cite{MMhcob}, which was instrumental in our proof of Theorem \ref{prop-h-terminal}. We would also like to thank Kristine Bauer for suggesting the possibility of an $n$-cubical isovariant Blakers--Massey theorem, and Boris Chorny for the alternative definition of isovariance. Thanks to the referee for suggestions that improved the flow of the paper. The authors would like to thank the Isaac Newton Institute for Mathematical Sciences, Cambridge, for support and hospitality during the programme \textbf{Equivariant homotopy theory in context}, where work on this paper was undertaken. This work was supported by EPSRC grant EP/Z000580/1.

\section{isovariant background}

We start by giving two alternative definitions of isovariance which may be of broader interest. 

Given an equivariant map $f:X \to Y$, there is a unique continuous map $f/G:X/G \to Y/G$ defined by taking the orbit $G\cdot x$ to $G \cdot f(x)$. We can form the following commutative square \[\xymatrix{ X \ar[r]^f \ar[d]_{\pi_X} & Y \ar[d]^{\pi_Y} \\ X/G \ar[r]^{f/G} & Y/G}\]
If the square is a pullback, then $f$ is isovariant. The converse holds if $X$ and $Y$ are further assumed to be weak Hausdorff, as proven in \cite[4.1]{isvtpullbacksquare}.

For another definition, consider the following double square of spaces.
\begin{gather}
    \begin{aligned}
\xymatrix{G/H \ar[r] \ar[d] & X \ar[r]^f \ar[d]_{\pi_X} & Y \ar[d]^{\pi_Y} \\ \ast \ar[r] & X/G \ar[r]^{f/G} & Y/G }
\end{aligned}
\label{def-alt-isvt}
\end{gather}

\begin{prop} Let $f:X \to Y$ be an equivariant map. 
The map $f$ is isovariant if and only if the left square of (\ref{def-alt-isvt}) being a pullback implies the outside square of (\ref{def-alt-isvt}) is a pullback for all maps $\ast \to X/G$ and for all subgroups $H$.
\end{prop}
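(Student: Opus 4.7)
The plan is to unpack what each square being a pullback means in terms of orbits and isotropy subgroups, after which both directions become essentially tautological.

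First, I would fix a point $x \in X$ and consider the map $\ast \to X/G$ picking out the orbit $Gx$. The left square of (\ref{def-alt-isvt}) is a pullback precisely when $G/H \to X$ is a $G$-equivariant map whose image is $Gx$ and which induces a homeomorphism $G/H \cong Gx$; up to isomorphism over $X$ this forces $H$ to be conjugate to $G_x$, and by rechoosing the map $G/H \to X$ we may take $H = G_x$ with $eG_x \mapsto x$. The outer square is a pullback precisely when $G/H \to Y$ is a homeomorphism onto the preimage $\pi_Y^{-1}(Gf(x)) = Gf(x) \cong G/G_{f(x)}$. Since $f$ is equivariant, $G_x \subseteq G_{f(x)}$, and the composite $G/G_x \to Y$ factors through the canonical surjection $G/G_x \twoheadrightarrow G/G_{f(x)} \cong Gf(x) \hookrightarrow Y$.

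For the forward direction, assume $f$ is isovariant, so $G_x = G_{f(x)}$ for every $x$. Then the canonical surjection $G/G_x \twoheadrightarrow G/G_{f(x)}$ is a homeomorphism, which exhibits the composite $G/H \to Y$ as a homeomorphism onto $Gf(x) = \pi_Y^{-1}(f/G(Gx))$, so the outer square is a pullback whenever the left one is.

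For the backward direction, suppose the implication holds for every map $\ast \to X/G$, and fix $x \in X$. Taking $H = G_x$ and the map $G/G_x \to X$ sending $eG_x \mapsto x$ makes the left square a pullback, so by hypothesis the outer square is also a pullback. As discussed above, this identifies the canonical surjection $G/G_x \twoheadrightarrow G/G_{f(x)}$ with a homeomorphism, forcing $G_x = G_{f(x)}$. Since $x$ was arbitrary, $f$ is isovariant. The only mild subtlety, which I would flag briefly, is the choice of representative $G/H \to X$ realizing the left square as a pullback; since any two such choices differ by a $G$-automorphism of $G/H$, the resulting condition on the outer square is independent of the choice.
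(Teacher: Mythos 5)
Your proof is correct and follows essentially the same route as the paper's: fix a point $x$, identify the two pullbacks with the orbits $G\cdot x \cong G/G_x$ and $G\cdot f(x) \cong G/G_{f(x)}$ via the orbit--stabilizer theorem, and compare isotropy groups using the containment $G_x \subseteq G_{f(x)}$ coming from equivariance. Your observation that the outer square being a pullback forces the canonical surjection $G/G_x \to G/G_{f(x)}$ to be injective is a marginally cleaner way to conclude $G_x = G_{f(x)}$ than the paper's appeal to $G_x \cong G_{f(x)}$ together with finiteness of $G$, but the argument is otherwise the same.
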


\begin{proof} Suppose that the left square being a pullback implies the outside square is a pullback for all maps $\ast \to X/G$. Now let $x \in X$ and consider the square with lower horizontal map given by sending the one point to  $[x]=G \cdot x \in X/G$. The pullback of the left square is $G \cdot x \subset X$. By the orbit-stabilizer theorem, $G\cdot x \cong G/G_x$, so there is an $H \leq G$ with $G/H \cong G/G_x$, and the left square is a pullback. Then the outside square is also a pullback, so the orbit $G \cdot f(x)$ is isomorphic to $G/H$ as well, where $H=G_x$. Since $f$ is equivariant, we have that $G_x \subseteq G_{f(x)}$ and since $G_x \cong G_{f(x)}$ and $G$ is finite, $G_x=G_{f(x)}$.  

Conversely, assume $f$ is isovariant and the left square is a pullback for $[x]:\ast \to X/G$. Then the pullback $G \cdot x$ is homeomorphic to $G/H$ for some $H$. By the orbit-stabilizer theorem $H \cong G_x$, and since $f$ is isovariant, $G_x=G_{f(x)}$, so $H \cong G_{f(x)}$. Thus the outside square is also a pullback since $G \cdot f(x) \cong G/H$.
\end{proof} 

We now recall relevant background about isovariant homotopy theory from \cite{sayisvtelm} and \cite{KYfixed}. Since the category of $G$-spaces with isovariant maps does not have a terminal object, we study the category $\itop$ of $G$-spaces with isovariant maps and an additional formal terminal object.
In \cite{KYfixed}, we show that $\itop$ has a weak model structure in which $G$-manifolds are both fibrant and cofibrant, leading to an isovariant version of Whitehead's theorem.

\medskip

We assume familiarity with the basics of cofibrantly generated model structures as presented in \cite{hovey}. In particular, we use the following notation for classes of maps with certain lifting properties. Let $\mathcal{I}$ be a class of maps in a category $\mathcal{C}$ containing all small colimits. Then we define:

$\iinj$ is the class of maps with the right lifting property with respect to every map in $\mathcal{I}$.

$\icof$ is the class of maps with the left lifting property with respect to all maps in $\iinj$. 

$\icell$ is the class of relative $\mathcal{I}$-cell complexes. A relative $\mathcal{I}$-cell complex is a transfinite composition of pushouts of coproducts of elements of $\mathcal{I}$. Note that $\icell \subset \icof$, and in fact, $\icof$ consists of retracts of maps in $\icell$.

\medskip

Particular cases of $\mathcal{I}$ are of interest in isovariant homotopy theory. To define these, let $\Delta^n$ be the standard $n$-simplex in $\topp$, that is,
\[
\Delta^n = \left\{(t_0, \dots, t_n) \in [0,1]^{n+1} : \sum_{i=0}^n t_i =1 \right\}.
\]

\begin{defn} \label{defn:linkingsimplex}
   Let $\mathbf{H} = \{ H_0 < H_1 < ... < H_n \}$ be a chain of subgroups of $G$. The \emph{linking simplex} $\Delta_G^{\mathbf{H}}$ is the quotient of $G \times \Delta^n$ where $(g,x)\sim(g',x)$ if and only if $gH_{k}=g'H_{k}$, when $x=(t_0, \dots, t_{n-k}, 0, \dots, 0)$, $0 \leq k \leq n$ (that is, $x$ is in the standard $n-k$ simplex in $\Delta^n$). Let $G \times \Delta^n \to \Delta^{\mathbf{H}}_G$ be the natural projection and denote the image of $(g,x)\in G \times \Delta^n$ under the projection by $\langle g,x\rangle$. The space $\Delta^{\mathbf{H}}_G$ has a left $G$-action given by $g' \cdot \langle g,x \rangle=\langle g'g,x \rangle$; points of the form $\langle g,(t_0, \dots, t_{n-k},0, \dots,0)\rangle$ where $t_{n-k}\neq 0$ are fixed by $gH_kg^{-1}$ under the $G$-action. 
\end{defn}

\begin{rem}\label{ex-G/H}
    Intuitively, the quotient in the definition of isovariant linking simplex yields a $G$-space with orbit space consisting of $(n-i)$-dimensional simplices $G/H_i \times \Delta^{n-i}$, such that the inclusions of standard simplices $\Delta^{n-i-1} \subset \Delta^{n-i}$ are compatible with the inclusions $H_{i} < H_{i+1}$ (or $G$-conjugates thereof). If $\mathbf{H} = \{ H_0 \}$, then $\Delta_G^{\mathbf{H}} = G/H_0$. This will also be denoted $\Delta^{H_0}$. When $G$ is clear from context, we drop it from the notation.
\end{rem}

The following is a demonstration of the linking simplices of $C_6$, the cyclic group of order 6, along with isovariant (non-self) maps between them. The numbers in parentheses represent the number of distinct linear isovariant maps sending vertices to vertices.

\includegraphics[width=5in]{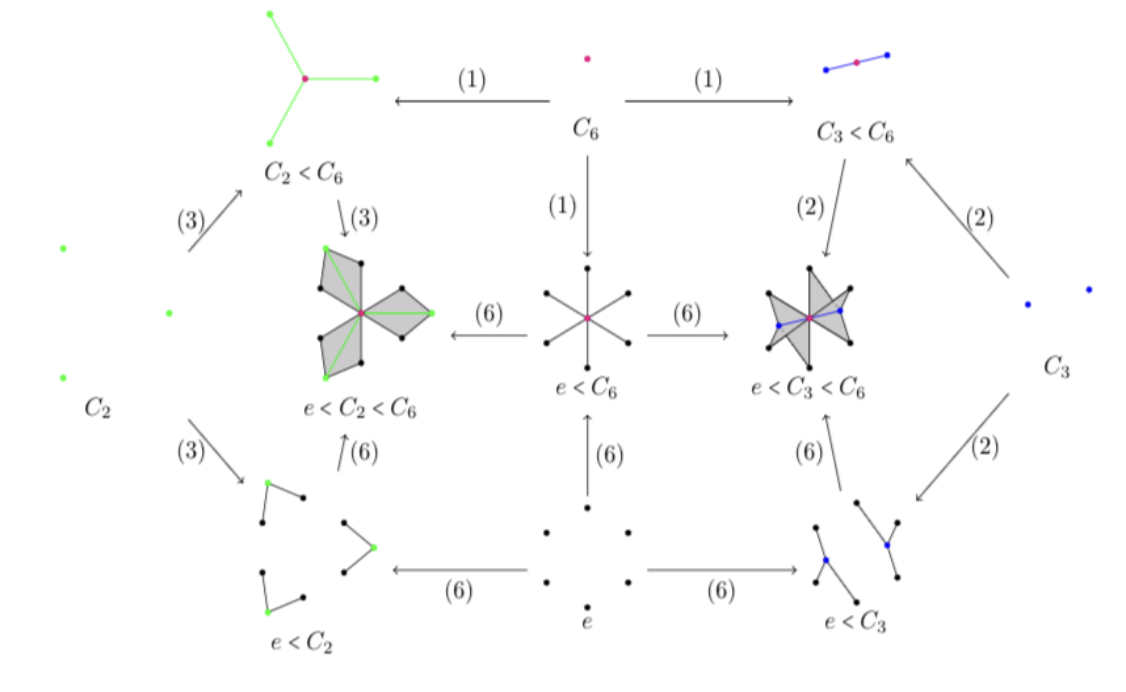}

We note that $\Delta^{H_0< \cdots < H_n}_G$ is the same as the ``equivariant simplex'' $\Delta_k(G;H_n, \dots, H_0)$ defined in \cite{illmanCW}, although in the equivariant simplex, subgroups may be repeated. Illman shows that the equivariant simplex is a compact Hausdorff space with orbit space $\Delta^n$.

The boundary of a linking simplex, $\partial \Delta^{\mathbf{H}}_G$, is the image of $G \times \partial \Delta^n$ (in the usual sense of $\partial \Delta^n$) under the identifications. 
Denote the boundary inclusion of a linking simplex by $b^\mathbf{H}: \partial \Delta^\mathbf{H} \to \Delta^\mathbf{H}$.

\begin{defn}\label{def-emap-imap}
    Let $X$ and $Y$ be $G$-spaces. Define $\emap(X,Y)$ to be the space of $G$-equivariant maps from $X$ to $Y$, and $\imap(X,Y)$ to be the subspace of $G$-isovariant maps from $X$ to $Y$. When $X$ is a linking simplex $\Delta^\mathbf{H}$, we denote the isovariant maps into $Y$ in the following ways: $\imap(\Delta^{\mathbf{H}},Y)=\ilink{Y}=Y_{\mathbf{H}}$. We refer to these as the isovariant $\mathbf{H}$-links of $Y$.
\end{defn}
Let $H \leq G$ be a subgroup. In the equivariant setting, $\emap(G/H,X)$ is equivalent to $X^H$, the subspace of elements which are $H$-fixed. In the isovariant setting, $\imap(\Delta^{H}_G,X)$ picks out the subspace of $X$ fixed by exactly $H$, denoted $X_H$. Here $\Delta^{H}_G=G/H$ denotes the linking simplex associated to the chain $\{H\}$ of length 0.

\medskip

The \emph{weak equivalences} of $\itop$ are the isovariant maps $f:X \to Y$ such that $\ilink{X} \to \ilink{Y}$  (that is, $\imap(\Delta^\mathbf{H},X) \to \imap(\Delta^\mathbf{H},Y)$) are weak equivalences of spaces for all strictly increasing chains $\mathbf{H}=\{H_0< \cdots< H_n\}$ of subgroups of $G$. We call these maps \textbf{isovariant weak equivalences}. These are, in general, different from isovariant homotopy equivalences. Recall that an \textbf{isovariant homotopy} between isovariant maps $f_0, f_1: X \to Y$ is an isovariant map $H: X \times [0,1] \to Y$ (where the product is the usual equivariant product of $G$-spaces, and $[0,1]$ has the trivial action) such that $H(x,0) = f_0(x)$ and $H(x,1) = f_1(x)$ for all $x \in X$. (We will soon see that $X \times [0,1]$ is an instance of the tensoring of $\itop$ over $\topp$.) An \textbf{isovariant homotopy equivalence} is an isovariant map $f$ which has an inverse up to isovariant homotopy.

\begin{defn}\label{def-weakly-contractible}
    We say that a $G$-space $X$ is \textit{isovariantly weakly contractible} if the unique map from $X$ to the formal terminal object is an isovariant weak equivalence. Equivalently, $X$ is isovariantly weakly contractible if and only if each $\ilink{X}$ is a weakly contractible space. We note that a $G$-space $X$ cannot be isovariantly (strictly) contractible in the usual sense because there are no isovariant maps from the formal terminal object to $X$.
\end{defn}

In order to define our classes of ``generating cofibrations" $\mathcal{I}$ and ``generating acyclic cofibrations" $\mathcal{J}$, we use pushout-products. Let $\cC, \sD,$ and $\cE$ be categories with all small colimits, and let $\otimes:\cC \times \sD \to \cE$ be a colimit-preserving functor. Then the pushout-product of a map $f:K \to X$ in $\cC$ and $g:L \to Y$ in $\sD$ is the map $f \Box g$ in $\cE$ from the pushout of the first three terms in the following square to the final vertex. This is also called the co-cartesian gap map of the square. 
\[
\xymatrix{
  K \otimes L \ar[r]^{id \otimes g} \ar[d]_{f \otimes id} & K \otimes Y \ar[d]^{f \otimes id} \\
  X \otimes L \ar[r]^{id \otimes g} & X \otimes Y
}
\]
The notion of pullback-hom will also be useful. If $\cC$ also has all small limits and there is a functor $\Hom_\sD:\sD^{op} \times \cE \to \cC$ with an adjunction between $-\otimes d$ and $\Hom_\sD(d,-)$ for every object $d \in \sD$, then the pullback-hom $\Hom_{\Box}(g,h)$ of $g:L \to Y$ in $\sD$ and $h:M \to Z$ in $\cE$ is the map in $\cC$ from the initial vertex to the pullback of the last three vertices of the square below. This is also called the cartesian gap map of the square.
\[
\xymatrix{
  \Hom(Y,M) \ar[r]^{g^\ast} \ar[d]_{h_\ast} & \Hom(L,M) \ar[d]^{h_\ast} \\
  \Hom(Y,Z) \ar[r]^{g^\ast} & \Hom(L,Z)
}
\]

An adjunction between pushout-products and pullback-homs is described in \cite[3.2.3]{caryparam}, which yields the following useful relationship between lifts. 

\begin{lemma} \cite[19.5]{rezkquasi} \label{lemma:popr-adj-pbhom} For maps as described above, a lift exists in the first diagram below if and only if a lift exists in the second diagram.
\[
\xymatrix{
  X \otimes L \coprod_{K \otimes L} K \otimes Y \ar[d]_{f \Box g} \ar[r] & M \ar[d]^h \\
  X \otimes Y \ar[r] \ar@{-->}[ur] & Z
}
 \, \, \, \, \, \, \, \, \ \ 
\xymatrix{
  K \ar[r] \ar[d]_f & \Hom(Y,M) \ar[d]^{\Hom_\Box(g,h)} \\
  X \ar[r] \ar@{-->}[ur] & \Hom(L,M)\times_{\Hom(L,Z)}\Hom(Y,Z)
}
\]
\end{lemma}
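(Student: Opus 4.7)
The plan is to use the two-variable adjunction built from the hom--tensor adjunctions $(- \otimes d) \dashv \Hom_\sD(d,-)$ to translate the left lifting square systematically into the right lifting square, object by object and arrow by arrow. The lemma is purely formal once one unpacks what a map out of the pushout $X \otimes L \sqcup_{K \otimes L} K \otimes Y$ and a map into the pullback $\Hom(L,M) \times_{\Hom(L,Z)} \Hom(Y,Z)$ each corepresent.

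First I would observe that, by the universal property of the pushout, a map from $X \otimes L \sqcup_{K \otimes L} K \otimes Y$ to $M$ is the same data as a pair of maps $\alpha : X \otimes L \to M$ and $\beta : K \otimes Y \to M$ whose restrictions to $K \otimes L$ agree. Dually, a map from $K$ to $\Hom(L,M) \times_{\Hom(L,Z)} \Hom(Y,Z)$ is the same data as a pair of maps $\alpha' : K \to \Hom(L,M)$ and $\gamma' : K \to \Hom(Y,Z)$ whose images in $\Hom(L,Z)$ agree. Applying the adjunction $(- \otimes L) \dashv \Hom(L,-)$ to $\alpha$ and the adjunction $(- \otimes Y) \dashv \Hom(Y,-)$ to $\beta$ (after postcomposing with $h : M \to Z$), and using naturality of the adjunction in the variable $L$ vs.\ $Y$, shows that the compatibility condition on the pushout side exactly matches the compatibility condition on the pullback side.

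Next I would verify the same correspondence for the bottom-right data: a map $X \otimes Y \to Z$ corresponds to a map $X \to \Hom(Y,Z)$, and composition with $f \Box g$ on the left corresponds on the right to composition with $\Hom_\Box(g,h)$. This step is where one must be careful about signs and directions, but it is formal: the pushout--product $f \Box g$ and the pullback--hom $\Hom_\Box(g,h)$ are built from exactly the maps that are transposes of one another under the two-variable adjunction, so the diagram on the left commutes if and only if the diagram on the right does. Finally, a candidate lift $X \otimes Y \to M$ in the first diagram transposes under $(- \otimes Y) \dashv \Hom(Y,-)$ to a candidate lift $X \to \Hom(Y,M)$ in the second diagram, and the two triangles expressing ``this is indeed a lift'' go to each other under the transpose.

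The main obstacle is really bookkeeping rather than any substantive idea: one has to check that the several ways of transposing the maps along the edges of the square (using either $L$ or $Y$ as the tensoring variable) are compatible, which follows from the naturality squares of the adjunctions in the non-tensored variable. A cleaner way to organize this, which I would follow if space permitted, is to note that the assumption supplies a two-variable adjunction $\otimes : \cC \times \sD \to \cE$ with right adjoint $\Hom_\sD$, and then to cite the general fact that a two-variable adjunction induces an adjunction between pushout--product and pullback--hom on arrow categories (as in Rezk's lemma [19.5]); the present lemma is just the statement that the unit/counit of this induced adjunction translates lifts to lifts.
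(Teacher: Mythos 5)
Your argument is correct: it is the standard formal proof, transposing each edge of the left-hand square through the adjunctions $(-\otimes d)\dashv \Hom_\sD(d,-)$ and matching the universal properties of the pushout and pullback, with the three compatibility conditions on one side corresponding exactly to the three on the other (your one slip, writing a map from $K$ rather than from $X$ into the pullback when stating its universal property, is harmless since the subsequent transpositions are done with the correct sources). The paper itself gives no proof of this lemma --- it simply cites Rezk [19.5] and Malkiewich--Ponto [3.2.3] --- and your write-up is essentially the argument carried out in those references.
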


When $\cC$ is $\topp$, and $\sD=\cE=\itop$, the cartesian product defines a tensoring of $\itop$ over $\topp$
\[
\topp \times \itop \to \itop
\]
with right adjoint given by $\Hom_\sD =\imap$, see \cite[2.3]{sayisvtelm}.

When $\cC=\cE=\itop$ and $\sD=\topp$, we can define a cotensoring of $\itop$ over $\topp$. Given a space $A$ and an isovariant space $X$, $X^A \in \itop$ is the subspace of $\map(A,X)$ consisting of maps  $f$ whose image lands in a constant isotropy subspace, i.e. $G_{f(a)}=G_{f(a')}$ for all $a,a' \in A$. We give $X^A$ a pointwise $G$-action. We show that this is a cotensoring. That is,

\begin{prop}\label{prop-cotensoring}
    For a space $A$ and isovariant spaces $X,Y$, there is a natural isomorphism
\[
\map(A, \imap(X,Y)) \cong \imap(X, Y^A).
\]
\end{prop}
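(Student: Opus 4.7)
The plan is to obtain this isomorphism by restricting the standard currying adjunction for compactly generated spaces, $\map(A, \map(X, Y)) \cong \map(X, \map(A, Y))$, to the appropriate subspaces on each side. Specifically, I would send a continuous map $\phi : A \to \imap(X, Y)$ to the map $\tilde{\phi} : X \to \map(A, Y)$ defined by $\tilde{\phi}(x)(a) = \phi(a)(x)$, and check that this assignment lands in $\imap(X, Y^A)$ and is a bijection onto it. Continuity (in both directions) is inherited from the classical exponential law for compactly generated spaces once we verify that the image of each side lies in the required subspace, so the content of the proof is the set-theoretic/group-theoretic verification.

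The bulk of the argument would be the following sequence of checks, starting from $\phi : A \to \imap(X, Y)$. First, for each fixed $x \in X$, the function $a \mapsto \phi(a)(x)$ has image inside a single isotropy stratum of $Y$, because isovariance of each $\phi(a)$ gives $G_{\phi(a)(x)} = G_x$, which is independent of $a$; hence $\tilde{\phi}(x)$ lies in $Y^A$. Second, $\tilde\phi$ is $G$-equivariant for the pointwise action on $Y^A$, since $\tilde\phi(gx)(a) = \phi(a)(gx) = g\phi(a)(x) = (g \cdot \tilde\phi(x))(a)$, using equivariance of each $\phi(a)$. Third, $\tilde\phi$ is isovariant: the isotropy of $\tilde\phi(x)$ in $Y^A$ agrees with the (common) isotropy $G_{\phi(a)(x)}$ of any of its values, and this equals $G_x$ again by isovariance of $\phi(a)$.

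The reverse direction is symmetric. Given $\psi : X \to Y^A$ isovariant, define $\phi(a)(x) = \psi(x)(a)$; then equivariance of each $\phi(a)$ follows from equivariance of $\psi$ and the pointwise action on $Y^A$, and isovariance of $\phi(a)$ uses the combination of two facts: $\psi(x) \in Y^A$ forces $G_{\psi(x)(a)}$ to be the common isotropy $G_{\psi(x)}$, and isovariance of $\psi$ identifies this with $G_x$. The two constructions $\phi \leftrightarrow \tilde\phi$ are manifestly inverse to each other on underlying sets.

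The only mild subtlety, and the closest thing to an obstacle, is making sure the subspace topologies cooperate, that is, that the currying homeomorphism of the ambient mapping spaces $\map(A, \map(X, Y)) \cong \map(X, \map(A, Y))$ carries the subspace $\map(A, \imap(X, Y))$ homeomorphically onto $\imap(X, Y^A)$. This follows because both are defined by the same pointwise conditions on values (equivariance, equality of isotropy groups), and the currying bijection is a homeomorphism of mapping spaces in the compactly generated setting; naturality in $A$, $X$, and $Y$ is then immediate from naturality of the ambient adjunction.
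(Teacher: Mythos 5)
Your proof is correct and takes essentially the same approach as the paper: the paper factors the isomorphism through $\imap(A \times X, Y)$ via the tensoring adjunction followed by a second adjunction from \cite{KYfixed}, but the composite is exactly the restricted exponential law you describe, and the key verification in both arguments is the same observation that $G_{\phi(a)(x)} = G_x$ is independent of $a$ because $G$ acts trivially on the $A$-coordinate. Your explicit identification of the isotropy group of an element of $Y^A$ under the pointwise action with the common isotropy of its values is the point that makes the isovariance checks work, and you handle it correctly.
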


\begin{proof}
    This follows from two adjunctions. First, we have the adjunction coming from the tensoring above
    \[
    \map(A,\imap(X,Y)) \cong \imap(A \times X, Y).
    \]
    Second, we have an adjunction as described in Claim 2 of \cite[3.10]{KYfixed}:
    \[
    \imap(A \times X,Y) \cong \imap(X, Y^A).
    \]
 In more detail, an isovariant map $f: A \times X \to Y$ has $G_{(a,x)}=G_{f(a,x)}$ for all $a \in A$ and for all $x \in X$, and the $G$-action on $A \times X$ is given by $g \cdot (a,x)=(a, g \cdot x)$, so $G_{(a,x)}=G_x$ for all $a \in A$. Thus the function $f(-,x):A \to Y$ has constant isotropy (and the function $x \mapsto f(-,x)$ is isovariant).
\end{proof}

The underlying product of two $G$-spaces with the diagonal action is again a $G$-space, but does not necessarily have isovariant projection maps to the factors. Thus, cartesian product is not the categorical product in $\itop$.

\begin{defn}\label{def-iproduct}
    The isovariant product of $G$-spaces $X$ and $Y$ is the subspace of the cartesian product $X \times Y$ comprised of tuples of points $(x,y)$ such that $G_x = G_y$. We denote the isovariant product of two $G$-spaces by $X \itimes Y$.
\end{defn}

\begin{defn} \label{defn:geometricij} Let $s_{k-1}:S^{k-1} \to D^{k}$ be the usual boundary inclusion of the unit disk in $\topp$. Define $\mathcal{I}$ in $\itop$ as the class of pushout-products of $s_{k-1}$ with $b^\mathbf{H}$, that is,

\[
\mathcal{I}=\left\{s_{k-1} \Box b^\mathbf{H}: \left(S^{k-1} \times \Delta^{\mathbf{H}}_G \right) \cup_{S^{k-1}\times \partial \Delta_G^{\mathbf{H}}} \left(D^k \times \partial \Delta^{\mathbf{H}}_G \right) \to D^k \times \Delta^{\mathbf{H}}_G\right\}_{\mathbf{H},k}
\]
Define $\mathcal{J}$ in $\itop$ as pushout-products of the maps in $\mathcal{I} \subseteq \itop$ with $i_0:\{0\} \to [0,1]  \in \topp$, that is,
\[
\mathcal{J}=\left\{s_{k-1} \Box b^\mathbf{H} \Box i_0\right\}_{\mathbf{H},k}
\]

Let $\mathcal{W}$ be the class of isovariant weak equivalences, that is, the maps $f:X \to Y$ such that the induced map $\imap(\Delta^\mathbf{H},X) \to \imap(\Delta^{\mathbf{H}},Y)$ is a weak equivalence of spaces for all strictly increasing chains of subgroups $\mathbf{H}=\{H_0<\cdots<H_n\}$.
\end{defn} 

\begin{defn}\label{def-cof-fib}
    We say that a $G$-space $X$ is $\mathcal{I}$-cofibrant if the map $\emptyset \to X$ is in $\icof$. We say that a $G$-space $X$ is $\mathcal{J}$-fibrant if the map from $X$ to the formal terminal object is in $\jinj$. Being $\mathcal{J}$-fibrant is equivalent to the existence of a lift $B \to X$ in any diagram of the form 
    \[\xymatrix{
    A \ar[r] \ar[d]_-j & X \\
    B
    }\]
    for any $j \in \mathcal{J}$.
\end{defn}

In section 3 of \cite{KYfixed}, we prove an isovariant Whitehead's theorem. The theorem is stated for smooth $G$-manifolds, but the proof works equally well for $G$-spaces which are $\mathcal{I}$-cofibrant and $\mathcal{J}$-fibrant.

\begin{thm}\label{thm-isvt-whitehead}
    Suppose that $X$, $Y$ are $G$-spaces which are $\mathcal{I}$-cofibrant and $\mathcal{J}$-fibrant. Suppose that $f: X \to Y$ is an isovariant weak equivalence. Then $f: X \to Y$ is an isovariant homotopy equivalence.
\end{thm}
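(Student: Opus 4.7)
The strategy is the standard model-categorical proof of Whitehead's theorem, adapted to the weak model structure on $\itop$ and driven by the pushout-product/pullback-hom adjunction of Lemma \ref{lemma:popr-adj-pbhom}.

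The key intermediate statement I would prove is the following mapping-space upgrade of the weak equivalence condition: for every $\mathcal{I}$-cofibrant $G$-space $Z$, the induced map
\[
f_*\colon \imap(Z,X) \to \imap(Z,Y)
\]
is a weak equivalence of topological spaces. I would prove this by transfinite induction on the $\mathcal{I}$-cell structure of $Z$. The base case is $Z=\Delta^{\mathbf{H}}$, which is exactly the hypothesis that $f$ is an isovariant weak equivalence. For the inductive step, a cell attachment presents $Z$ as a pushout along some $s_{n-1} \Box b^{\mathbf{H}} \in \mathcal{I}$, which $\imap(-,X)$ and $\imap(-,Y)$ turn into pullbacks in $\topp$. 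By Lemma \ref{lemma:popr-adj-pbhom}, the $\mathcal{J}$-fibrancy of $X$ and $Y$ translates into the restriction maps $\imap(\Delta^{\mathbf{H}},X) \to \imap(\partial \Delta^{\mathbf{H}}, X)$ (and similarly for $Y$) being Serre fibrations of spaces — this is precisely what the pushout-product with $i_0\colon \{0\}\to [0,1]$ in the definition of $\mathcal{J}$ achieves. Hence these pullbacks are homotopy pullbacks, and the long exact sequence of a fibration together with the five lemma propagates the weak equivalence from the previous stage to the next. Transfinite composition steps are handled by the usual observation that $\imap(-,-)$ converts sequential colimits of cofibrations into sequential limits along fibrations.

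Given this, the construction of an isovariant homotopy inverse is standard. Applying the statement with $Z=Y$ (which is $\mathcal{I}$-cofibrant by assumption), the map $f_*\colon \imap(Y,X)\to\imap(Y,Y)$ is a weak equivalence, hence a bijection on $\pi_0$; choose $g\colon Y\to X$ whose image lies in the path component of $\mathrm{id}_Y$, yielding an isovariant homotopy $fg\simeq \mathrm{id}_Y$. Applying the statement again with $Z=X$ shows $f_*\colon \imap(X,X)\to \imap(X,Y)$ is injective on $\pi_0$; since $gf$ and $\mathrm{id}_X$ have the same image $fgf\simeq f$, we conclude $gf\simeq \mathrm{id}_X$. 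The main obstacle is the inductive step of the mapping-space statement: verifying that the pullback-hom of each generating cofibration in $\mathcal{I}$ with the maps $X\to *$, $Y\to *$ gives rise to Serre fibrations on mapping spaces, and that weak equivalences are preserved through the iterated pullbacks and transfinite colimits. All remaining steps are formal.
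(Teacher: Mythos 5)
The paper does not actually prove this theorem here: it is imported from \cite{KYfixed} (Theorem 3.10 there), with only the remark that the argument given for smooth $G$-manifolds applies verbatim to $\mathcal{I}$-cofibrant, $\mathcal{J}$-fibrant spaces. So there is no in-paper proof to compare against; judged on its own terms, your argument is correct and is the standard ``derived mapping space'' proof of a Whitehead theorem. Both pillars are sound. First, the pushout-product/pullback-hom juggling is exactly right: since $s_{m-1}\Box s_{n-1}\cong s_{m+n-1}$ and $s_{m-1}\Box i_0$ is isomorphic to $D^m \to D^m\times[0,1]$, the $\mathcal{J}$-fibrancy of $X$ is literally the statement that $\Hom_\Box(s_{n-1}\Box b^{\mathbf{H}}, X\to \ast)$ is a Serre fibration for every $n$ and $\mathbf{H}$, which makes each cell-attachment pullback of mapping spaces a homotopy pullback. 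Second, the $\pi_0$ argument with $Z=Y$ and then $Z=X$ correctly produces a two-sided isovariant homotopy inverse, using that paths in $\imap(Y,Y)$ correspond to isovariant homotopies via the tensoring of $\itop$ over $\topp$.

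One step you elide deserves explicit treatment. The cell-attachment comparison requires not only that $\imap(\Delta^{\mathbf{H}},X)\to\imap(\Delta^{\mathbf{H}},Y)$ be a weak equivalence (the hypothesis) but also that $\imap(\partial\Delta^{\mathbf{H}},X)\to\imap(\partial\Delta^{\mathbf{H}},Y)$ be one, since the corner of $s_{n-1}\Box b^{\mathbf{H}}$ involves $\partial\Delta^{\mathbf{H}}$, whose mapping spaces are not among the links appearing in the definition of isovariant weak equivalence. The boundary of a linking simplex is not itself a linking simplex; it is a finite $\mathcal{I}$-cell complex assembled from linking simplices of proper subchains of $\mathbf{H}$ (cf.\ Remark~\ref{ex-G/H}), so this is handled by a nested induction on the length of the chain, after which your cell induction applies to $\partial\Delta^{\mathbf{H}}$ as well. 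With that sub-induction supplied, and the usual care that transfinite composition stages give towers of fibrations whose limits are homotopy limits, the proof is complete.
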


Finally, in order to define a notion of isovariant connectivity, we recall the notion of connectivity for topological spaces and maps between them.

\begin{defn} 
A space $X$ is $0$-connected if $\pi_0(X)$ is trivial, and for $k>0$,
a connected space $X$ is $k$-connected if $\pi_i(X, x_0)=0$ for all $x_0\in X$ and $0\leq i \leq k$. By convention, every space  is $(-2)$-connected, and
nonempty spaces are $(-1)$-connected.

A map of spaces is called $k$-connected if its homotopy fiber over each point is
$(k- 1)$-connected. Equivalently, $f:X \to Y$ is $k$-connected if $\pi_i f$ is an isomorphism for $i<k$ and a surjection for $i=k$.
\end{defn}

\section{Isovariant homotopy limits}

In this section we define homotopy limits and homotopy pushouts in the isovariant setting and describe their interaction with the isovariant mapping space functor, $\mathsf{M}_\mathbf{H}(-):\itop \to \topp$. We also define homotopy pushout and pullback squares, as well as the higher analogues of homotopy pushout squares, strongly cocartesian cubes.

We first give an explicit model for homotopy pullbacks in the isovariant category that we will use in the isovariant Blakers--Massey theorem.

\begin{defn}\label{def-pij}
    Let $W$ be a $G$-space. The cotensor $\ipathW$ is the $G$-space with pointwise action defined as
    \[
    \ipathW = \{ \gamma \in \map([0,1],W): G_{\gamma(0)}=G_{\gamma(t)} \forall t \in [0,1]\}.
    \]
    In \cite{KYfixed}, we denoted this space by $\mathsf{P}_{\mathsf{isvt}}W$.
\end{defn}

Our model for the homotopy pullback of $i: X \to W$, $j: Y \to W$ is then
\[P(i,j) = X \times_W \ipathW \times_W Y = \{ (x, \gamma, y) : \gamma(0) = i(x), \gamma(1) = j(y), G_{\gamma(0)} = G_{\gamma(t)} \forall t\}.\]
The isovariant maps $\pi_X: P(i,j) \to X$ and $\pi_Y: P(i,j) \to Y$ are given by projection onto the first and last component, respectively. 
In this definition, we can take the pullback over $W$ in the category of spaces or in the isovariant category, because for a $G$-space $W$, the pullback $X \itimes_W Y$ in $\itop$ agrees with the pullback $X \times_W Y$ in $\topp$. This is because $i: X \to W$ and $j: Y \to W$ are isovariant, so $i(x) = j(y)$ guarantees that $G_x = G_{i(x)} = G_{j(y)} = G_y$.

\medskip

This $P(i,j)$ is an instance of the Bousfield--Kan formula for a homotopy limit, which we elaborate on later in this section. First, we prove

\medskip

\begin{lemma}\label{lem-commutes-hopb}
For isovariant maps $i: X \to W$ and $j: Y \to W$, there is a canonical homeomorphism
    \[\ilink{P(i,j)} \cong \ilink{X} \times_{\ilink{W}} \map([0,1], \ilink{W}) \times_{\ilink{W}} \ilink{Y}\]
    for all chains of strictly increasing subgroups $\mathbf{H}$. That is, the functor $\ilink{-}: \itop \to \topp$ commutes with homotopy pullbacks.
\end{lemma}

\begin{proof}
    By Proposition 2.3 of \cite{sayisvtelm}, $\ilink{-}$ is a right adjoint, and therefore commutes with limits, and in particular with pullbacks.
    Thus the functor $\ilink{-}$ commutes with pullbacks  for all strictly increasing chains of subgroups  $\mathbf{H}$. That is, if $i: X \to W$ and $j: Y \to W$ are isovariant maps, then $\ilink{X \times_W Y}\cong \ilink{X} \times_{\ilink{W}} \ilink{Y}$. 

    \medskip
    Using Proposition \ref{prop-cotensoring} with $A=[0,1]$ and $X=\Delta^\mathbf{H}$, 
    the functor $\ilink{-}$ commutes with path spaces for all strictly increasing chains of subgroups $\mathbf{H}$. That is, $\ilink{\ipathW} \cong \map([0,1], \ilink{W})$ for all $W$. 
\end{proof}

\begin{defn}\label{def-hopb-sq}
A commutative square in $\itop$ will be called a \textit{homotopy pullback square} if the induced map from the initial object of the square to the homotopy pullback of the rest of the square is an isovariant weak equivalence.
\end{defn}

    We will now discuss homotopy pushouts. As in \cite{sayisvtelm}, we use the double mapping cylinder for the homotopy pushout. That is, given the diagram
    \[
\xymatrix{Z \ar[r]^v \ar[d]^u & Y \\ X}
    \]
    the homotopy pushout is given by \[
    X \cup_u (Z \times [0,1]) \cup_v Y.
    \]

\begin{defn}
    Dually to a homotopy pullback square, a commutative square in $\itop$ will be called a \textit{homotopy pushout square} if the induced map from the homotopy pushout of the punctured square to the final object is an isovariant weak equivalence.
\end{defn}

For the higher Blakers--Massey theorem, we will need the notion of a homotopy (co)cartesian diagram for higher dimensional cubes. We follow the standard double cobar construction definitions for homotopy limits used in the equivariant setting, as in \cite[Ch V]{alaskanotes}. 

Recall from Proposition \ref{prop-cotensoring} that $\itop$ is cotensored over $\topp$. 
This implies that $\itop$ is also cotensored over simplicial sets. For a simplicial set $A_\bullet$ and an isovariant space $X$, define $X^{A_\bullet} = X^{|A_\bullet|}$. In order to study homotopy limits in the isovariant category, it will be useful to show that $\ilink{-}$ commutes with this cotensoring.

\begin{lemma}\label{lem-ilink-commutes-cotensoring}
    For a space $A$ and an isovariant space $X$, there is a canonical homeomorphism $\ilink{X^A} \cong \map(A, \ilink{X})$. Similarly, for a simplicial set $A_\bullet$, there is a canonical homeomorphism $\ilink{X^{A_\bullet}} \cong \map(|A_\bullet|, \ilink{X})$. 
\end{lemma}

\begin{proof} 
    This follows from Proposition \ref{prop-cotensoring}, replacing $A$ with $|A_\bullet|$ for the second statement.
\end{proof}

Recall from Definition \ref{def-iproduct} that the isovariant category $\itop$ has products; without the formal terminal object, this is limited to nonempty products, with the formal terminal object giving the empty product. 
The isovariant category also has equalizers, which agree with the equalizers on underlying spaces.
Since the isovariant category has products and equalizers and is cotensored over simplicial sets, we can define the totalization of a cosimplicial object in $\itop$.

\begin{defn}
    Let $X^\bullet : \Delta \to \itop$ be a cosimplicial isovariant space. Define its totalization $\Tot(X^\bullet)$ to be the equalizer of the two maps
    \[
    \xymatrix{
    \iprod \limits_n (X^n)^{\Delta^n} \ar@<1ex>[r] \ar@<-1ex>[r] & \iprod \limits_{\phi: [n] \to [k] \in \Delta} (X^k)^{\Delta^n}
    }
    \]
    The projections of the two maps to the $\phi$th factor are given by precomposition with the map $\Delta^n \to \Delta^k$ induced by $\phi: [n] \to [k]$, and postcomposition with the map $X^n \to X^k$ induced by $\phi$. 
\end{defn}

An important property of isovariant totalization is that it commutes with $\ilink{-}$.

\begin{prop}\label{prop-ilink-commutes-tot}
    For a cosimplicial isovariant space $X^\bullet$, there is a canonical homeomorphism
    $$\ilink{\Tot(X^\bullet)} \cong \Tot(\ilink{X^\bullet})$$
\end{prop}

\begin{proof}
    By \cite{sayisvtelm}[Prop 2.3], $\ilink{-}$ is a right adjoint, and therefore commutes with all limits. By Lemma \ref{lem-ilink-commutes-cotensoring}, it also commutes with the cotensoring of $\itop$ over $\topp$. Therefore it commutes with totalization, as required.
\end{proof}

We now define the cosimplicial replacement of a diagram in $\itop$ in the standard way.   

\begin{defn}\label{def-crep}
    Let $J$ be a small category, and $D: J \to \itop$ a functor. Define a cosimplicial isovariant space, $\crep(D): \Delta \to \itop$, as
    \[\crep(D)^n = \iprod_{j_0 \to \cdots \to j_n} D(j_n)\]
    where the isovariant product is indexed over all composable sequences $j_0 \to \cdots \to j_n$ of $n$ arrows in $J$. The cofaces are induced by composition in $J$ and evaluation $J(j_n,j_{n+1}) \times D(j_n) \to D(j_{n+1})$ and codegeneracies are induced by inserting identities. More explicitly, we describe the coface maps into the factor $D(j_{n+1})$ labeled by the chain $\nu = [j_0 \to \cdots \to j_{n+1}]$ in $\crep(D)^{n+1}$. For $0 \leq k \leq n$, we can ``cover'' $j_k$, resulting in the new chain $[j_0 \to \cdots \to \hat{j_k} \to \cdots \to j_{n+1}]$, which labels a factor of $D(j_{n+1})$ in $\crep(D)^{n}$, and $d^k:\crep(D)^n \to \crep(D)^{n+1}$ is defined by projecting to that factor, then mapping by the identity to the factor $D(j_{n+1})$ labeled by $\nu$ in $\crep(D)^{n+1}$, for $0 \leq k \leq n$. The map $d^{n+1}:\crep(D)^n \to \crep(D)^{n+1}$ composes the projection map with evaluating $D$ along the deleted morphism $D(j_n) \to D(j_{n+1})$ to the factor labeled by $\nu$. The codegeneracy maps $s^k: \crep(D)^{n+1} \to \crep(D)^n$ for $0 \leq k \leq n$ are defined by inserting identities. Explicitly, $s^k$ maps into the factor $D(j_n)$ labeled by $[j_0 \to \cdots \to j_n]$ by projecting $\crep(D)^n$ to the identical factor labeled by the chain in which one has inserted the identity map $j_k \to j_k$.
\end{defn}

As in \cite[\S 5.7]{Dug}, we can now use the cosimplicial replacement and the totalization to define homotopy limits.

\begin{defn}\label{def-holim}
    Let $J$ be a small category, and $D: J \to \itop$ a functor. Define
    $$\mathrm{holim}_J D : = \Tot(\crep(D)).$$
\end{defn}

One can check that for a pullback diagram, this agrees with the homotopy pullback $P(i,j)$ defined above.

\begin{prop}\label{prop-ilink-holim}
    The functor $\ilink{-}$ commutes with homotopy limits. That is, for $D: J \to \itop$, there is a canonical homeomorphism
    $$\ilink{\mathrm{holim}_J D} \cong  \mathrm{holim}_J(\ilink{D}) .$$
\end{prop}

\begin{proof}
    By Proposition \ref{prop-ilink-commutes-tot}, $\ilink{-}$ commutes with totalization. It also commutes with products, and therefore it commutes with $\crep$. Thus 
    $$\ilink{\mathrm{holim}_J D} =  \ilink{\Tot(\crep(D))}  \cong \Tot(\crep(\ilink{D})) = \mathrm{holim}_J(\ilink{D})$$
    as required.
\end{proof}

\begin{defn}\label{def-n-cube}
    An $S$-cube in a category $\cC$ is a functor $\mathcal{X}:\mathcal{P}(S) \to \cC$,
where $S$ is a finite set and $\mathcal{P}(S)$ is the poset of all subsets of $S$. When $S = \underline{n} = \{1, 2, \dots , n\}$, we will refer to the resulting $S$-cube as an $n$-cube. Note that these are strictly commutative $n$-cubes.
\end{defn}

Let $\mathcal{P}_0(\underline{n})$ denote the poset of nonempty subsets of $\underline{n}$.
We call an $n$-cube $\mathcal{X}$ \textit{strongly cocartesian} if each face of dimension $2$ is a homotopy pushout square.

\section{The Blakers--Massey theorem}
We now prove three versions of the Blakers--Massey theorem, using methods of diagram categories. First, we rephrase and reprove \cite[2.1]{Hau}, an equivariant version. Then we prove the analogous isovariant version and its $n$-cubical generalization. 

\subsection{The equivariant Blakers--Massey theorem}
Let $G$ be a finite group. We recall the definition of equivariant connectivity for a $G$-equivariant map.

\begin{defn} Let $n_\bullet: Sub(G) \to \mathbb{Z}$ be a conjugacy-invariant function on the set of subgroups of $G$, and let $f:X \to Y$ be a $G$-map.
  We say that $f$ is $n_\bullet$-connected if $f^H: X^H \to Y^H$ is an $n_H$-connected map for all $H \leq G$. 
  \end{defn}

  An equivalent formulation of the definition is that $f$ is $n_\bullet$-connected if the induced map $\emap(G/H,f): \map(G/H,X) \to \emap(G/H,Y)$ is $n_H$-connected for all subgroups $H \leq G$. We generalize this to the isovariant setting in Definition \ref{def-isvtconn}.

As preparation for the isovariant Blakers--Massey theorem, we reprove Hauschild's equivariant Blakers--Massey theorem, which we rephrase below. In the equivariant category, a homotopy pushout square of $G$-spaces is a commutative square of equivariant maps such that the induced map from the homotopy pushout of the punctured square to the final object is an equivariant weak equivalence.

\begin{thm}\label{thm-EBM} {(Theorem 2.1 of \cite{Hau})}
    Suppose that the following is a homotopy pushout square of $G$-spaces:
    \[\xymatrix{
Z \ar[r]^-v \ar[d]_-u & Y \ar[d]^-j \\
X \ar[r]^-i & W
}\]
If $u$ is $n_\bullet$-connected and $v$ is $m_\bullet$-connected, then the (homotopy) cartesian gap map of the square is $(n_\bullet + m_\bullet - 1)$-connected.
\end{thm}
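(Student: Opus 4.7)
The plan is to reduce the equivariant statement to the classical (non-equivariant) Blakers--Massey theorem by applying the $H$-fixed points functor $(-)^H \cong \emap(G/H,-)$ for each subgroup $H\leq G$. The strategy will be organized around three structural facts: (i) $n_\bullet$-connectivity is defined subgroup by subgroup, so it suffices to check $\pi^H$ for each $H$, where $\pi\colon Z\to X\times^h_WY$ is the cartesian gap map; (ii) $(-)^H$ is a right adjoint (to $G/H\times -$) and therefore commutes with (homotopy) pullbacks, in particular $\pi^H$ is the cartesian gap map of the $H$-fixed square; and (iii) for a suitably cofibrant model of the input square, $(-)^H$ also preserves the homotopy pushout, so that the classical Blakers--Massey theorem can be invoked on $H$-fixed points. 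This framing is chosen so that the same pattern will transfer to the isovariant setting in Section~4, where $G/H$ is replaced by the linking simplices $\Delta^{\mathbf{H}}$ and $(-)^H$ is replaced by $\ilink{-}=\imap(\Delta^{\mathbf{H}},-)$.

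Concretely, I would first replace the square, up to equivariant weak equivalence, by a model in which $u$ is built as a transfinite composition of pushouts of equivariant generating cofibrations $G/K\times s_{k-1}$, so that $W=X\cup_ZY$ is the strict pushout. For such cofibrations the functor $(-)^H$ preserves the pushout, so for each $H$ the diagram
\[\xymatrix{
Z^H \ar[r]^{v^H} \ar[d]_{u^H} & Y^H \ar[d]^{j^H} \\
X^H \ar[r]^{i^H} & W^H
}\]
is a homotopy pushout of topological spaces, with $u^H$ an $n_H$-connected map and $v^H$ an $m_H$-connected map by hypothesis. The classical Blakers--Massey theorem then implies that the cartesian gap map $Z^H\to X^H\times^h_{W^H}Y^H$ is $(n_H+m_H-1)$-connected. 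Because $(-)^H$ commutes with homotopy pullbacks, this is precisely $\pi^H$; letting $H$ range over all subgroups of $G$ gives the desired $n_\bullet+m_\bullet-1$-connectivity of $\pi$.

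The main obstacle is the interaction between fixed points and the homotopy pushout. In the equivariant case it suffices to take a relative $G$-CW presentation of $u$, but I would write the argument in terms of the generating cofibrations of Definition~\ref{defn:geometricij} (or their equivariant analogs) and Lemma~\ref{lemma:popr-adj-pbhom}, so that each lifting problem against $\pi$ is converted via the pushout-product/pullback-hom adjunction into a lifting problem against $u$ and $v$ of the pushout-product of $G/H\times s_{k-1}$ with the given cofibration. This makes the cellular input explicit, and is the step whose isovariant counterpart—preservation of isovariant homotopy pushouts by $\imap(\Delta^{\mathbf{H}},-)$ for squares built from the generating cofibrations $\mathcal{I}$—will be the most delicate part of the generalization in Section~4. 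With the equivariant proof organized this way, the isovariant version becomes a substitution of the cellular and mapping-space inputs into the same skeleton.
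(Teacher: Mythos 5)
Your proposal is correct and follows essentially the same route as the paper: apply $(-)^H$ for each subgroup $H$, use that fixed points of a finite group action preserve both the homotopy pushout and the homotopy pullback (the paper uses the explicit path-space model $X \times_W \map(I,W) \times_W Y$ for the latter), and invoke the classical Blakers--Massey theorem on each fixed-point square. Your extra discussion of cellular presentations and the pushout-product adjunction is a more explicit justification of the standard fact that $(-)^H$ commutes with homotopy pushouts, but it does not change the argument.
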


\begin{proof}
    Homotopy pushouts commute with fixed points of a finite group action, and therefore the following is a homotopy pushout square of spaces for all $H \leq G$.
    \begin{gather}
\begin{aligned}
\xymatrix{
Z^H \ar[r]^-{v^H} \ar[d]_-{u^H} & Y^H \ar[d]^-{j^H} \\
X^H \ar[r]^-{i^H} & W^H
}
\end{aligned}
\label{eqvt-fixed-square}
\end{gather}
We will now identify the cartesian gap map of this square of spaces. We will use the following model for the homotopy pullback of $i$ and $j$
\[P = X \times_W \map(I,W) \times_W Y = \{ (x, \gamma, y) : \gamma(0) = i(x), \gamma(1) = j(y) \}.\]

Then $P^H$ is the homotopy pullback of $i^H$ and $j^H$. Furthermore, consider the cartesian gap map $c: Z \to P$, given by $c(z) = (u(z), const_{(i \circ u) (z)}, v(z))$, where we use the fact that $i \circ u = j \circ v$.
 Here $const_w$ is the path in $W$ given by $const_w(t) = w$. 
Then $c^H: Z^H \to P^H$ is the cartesian gap map of the square (\ref{eqvt-fixed-square}) above.

By assumption, $u^H$ is $n_H$-connected and $v^H$ is $m_H$-connected. By the ordinary Blakers--Massey theorem, $c^H$ is therefore $(n_H + m_H - 1)$-connected. Therefore $c$ is $(n_\bullet + m_\bullet - 1)$-connected, as required.
\end{proof}

Note that Hauschild's result states a range in which the map
$\pi_V(Y,Z) \to \pi_V(W,X)$
is bijective or surjective for a $G$-representation $V$. This follows from Theorem \ref{thm-EBM} and properties of representation-graded homotopy groups.

\subsection{The isovariant Blakers--Massey Theorem}
We will now follow the blueprint of the proof of Theorem \ref{thm-EBM} to prove an isovariant Blakers--Massey theorem, using the fact from Lemma \ref{lem-commutes-hopb} that the functor $\ilink{-}=\imap(\Delta^\mathbf{H}, -): \itop \to \topp$ commutes with homotopy pullbacks.

We first define isovariant connectivity. For an isovariant map $f: X \to Y$, denote by $\ilink{f}$ the induced map $\imap(\Delta^\mathbf{H}, f): \imap(\Delta^\mathbf{H}, X) \to \imap(\Delta^\mathbf{H}, Y)$.

\begin{defn} \label{def-isvtconn} Let $n_\bullet: \{ \text{chains of strictly increasing subgroups of $G$} \} \to \mathbb{Z}$ be a conjugacy-invariant function on the set of chains of subgroups of $G$, and let $f:X \to Y$ be an isovariant map. We say that $f$ is $n_\bullet$-connected if $\ilink{f}: \ilink{X} \to \ilink{Y}$ is an $n_\mathbf{H}$-connected map of spaces for all chains of strictly increasing subgroups $\mathbf{H}$.
\end{defn}

There are some important differences between isovariant connectivity and equivariant connectivity. For example, a finite-dimensional $G$-representation $V$ is not isovariantly weakly contractible. That is, the unique map from $V$ to the formal terminal object is not an isovariant weak equivalence. Moreover, the inclusion $V \to S^V$ has isovariant connectivity $n_\mathbf{H} = \infty$ for all chains of subgroups $\mathbf{H}$ that do not contain $G$.

\medskip

We are ready to prove the isovariant Blakers--Massey theorem. We note that this is a special case of Theorem \ref{thm-higher-isvt-BM}. 

\begin{thm}\label{thm-IBM}
    Suppose that the following is a homotopy pushout square in the isovariant category:
    \[\xymatrix{
Z \ar[r]^-v \ar[d]_-u & Y \ar[d]^-j \\
X \ar[r]^-i & W
}\]
If $u$ is $n_\bullet$-connected and $v$ is $m_\bullet$-connected, then the cartesian gap map of the square is $(n_\bullet + m_\bullet - 1)$-connected.
\end{thm}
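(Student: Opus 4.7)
The plan is to mirror Hauschild's proof of the equivariant Blakers--Massey theorem (Theorem \ref{thm-EBM}), applying the functor $\ilink{-}$ to the entire homotopy pushout square so that the classical Blakers--Massey theorem can be invoked chain-by-chain. Concretely, for each strictly increasing chain of subgroups $\mathbf{H}$, I would argue that
\[\xymatrix{
\ilink{Z} \ar[r]^-{\ilink{v}} \ar[d]_-{\ilink{u}} & \ilink{Y} \ar[d]^-{\ilink{j}} \\
\ilink{X} \ar[r]^-{\ilink{i}} & \ilink{W}
}\]
is itself a homotopy pushout square of topological spaces. By Proposition \ref{prop-commutes-hopb}, the homotopy pullback of this square is $\ilink{P(i,j)}$, and the cartesian gap map of the square is $\ilink{c}$, where $c \colon Z \to P(i,j)$ is the cartesian gap map in the isovariant category. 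The hypotheses on $u$ and $v$ translate directly into $\ilink{u}$ being $n_\mathbf{H}$-connected and $\ilink{v}$ being $m_\mathbf{H}$-connected, so the classical Blakers--Massey theorem yields that $\ilink{c}$ is $(n_\mathbf{H} + m_\mathbf{H} - 1)$-connected. By Definition \ref{def-isvtconn}, this is exactly the desired isovariant connectivity of $c$.

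The substantive step is therefore to show that $\ilink{-}$ takes isovariant homotopy pushouts to homotopy pushouts of spaces, which is the isovariant analog of the well-known fact that finite-group fixed points commute with homotopy pushouts. My plan is to model the isovariant homotopy pushout of $u$ and $v$ via a double mapping cylinder $X \cup_Z (Z \times [0,1]) \cup_Z Y$, after first replacing $u$ and $v$ by isovariant cofibrations if necessary. Proposition \ref{prop-cotensoring} already tells us that $\ilink{-}$ commutes with the cylinder factor $(-) \times [0,1]$, so the problem reduces to verifying that $\ilink{-}$ commutes with pushouts along isovariant cofibrations. Since isovariant cofibrations (generated by $\mathcal{I}$) are closed inclusions and linking simplices are compact and have a well-understood stratification by isotropy, I expect to establish this by a direct analysis, describing an isovariant map $\Delta^\mathbf{H} \to A \cup_Z B$ via its restrictions to the preimages of the two summands, and checking that the resulting continuous bijection onto $\ilink{A} \cup_{\ilink{Z}} \ilink{B}$ is a homeomorphism.

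The main obstacle I expect is precisely this pushout-preservation verification: unlike in the equivariant case, where one can work orbit-by-orbit and the $H$-fixed-point functor is defined by a single element of $G/H$, the linking simplex $\Delta^\mathbf{H}$ is a multi-stratum object, and isovariant maps out of it can legitimately span both sides of the gluing locus $Z$. Ensuring that this identification is topological (not just set-theoretic) will require care, and it may be cleanest to exhibit a levelwise model of the isovariant homotopy pushout on which $\ilink{-}$ can be evaluated directly, paralleling the way Proposition \ref{prop-hopb} gives an explicit levelwise model for the homotopy pullback $P(i,j)$ using $\ipath$. Once this is in hand, the remainder of the argument is purely formal reduction to the classical Blakers--Massey theorem.
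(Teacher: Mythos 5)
Your overall argument is exactly the paper's: apply $\ilink{-}$ to the square chain by chain, use Proposition \ref{prop-commutes-hopb} to identify the homotopy pullback of the resulting square of spaces as $\ilink{P(i,j)}$, recognize its cartesian gap map as $\ilink{c}$, and invoke the classical Blakers--Massey theorem. The paper disposes of the one substantive input --- that $\ilink{-}$ carries isovariant homotopy pushouts to homotopy pushouts of spaces --- by citing Lemma 3.2 of \cite{sayisvtelm}; if you quote that result, your proof is complete and identical to the paper's.

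Where your proposal has a genuine gap is in the plan for proving that input yourself. You cannot hope for a homeomorphism, or even a bijection, from $\ilink{A}\cup_{\ilink{Z}}\ilink{B}$ to $\ilink{A\cup_Z B}$: for a chain $\mathbf{H}$ of length at least $1$ the linking simplex is higher-dimensional (e.g.\ $\Delta^{e<C_2}_{C_2}$ is an interval's worth of free orbits terminating at a fixed point), so an isovariant map out of it can cross the gluing locus $Z$ arbitrarily often and need not factor through either $A$ or $B$. You observe this phenomenon yourself, but the proposed remedy --- restricting to the preimages of the two summands --- does not yield a point of $\ilink{A}\cup_{\ilink{Z}}\ilink{B}$, since those preimages form a varying, not fixed, decomposition of $\Delta^{\mathbf{H}}$. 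The commutation holds only up to weak equivalence, and establishing it requires a genuinely homotopical (excision- or cell-induction-style) argument of the kind carried out in \cite{sayisvtelm}, not a point-set identification. A smaller imprecision: Proposition \ref{prop-cotensoring} concerns the cotensoring $Y^A$, not the cylinder; one has $\imap(\Delta^{\mathbf{H}}, Z\times[0,1])\cong \ilink{Z}\times \map(\Delta^n,[0,1])$, which is again only homotopy equivalent, not homeomorphic, to $\ilink{Z}\times[0,1]$. None of this affects the main theorem provided you cite the pushout-commutation lemma rather than reproving it as sketched.
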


\begin{proof}
    By Lemma 3.2 of \cite{sayisvtelm}, $\ilink{-}$ commutes with homotopy pushouts up to homotopy. Therefore the square
\begin{gather}
\begin{aligned}
\xymatrix{
\ilink{Z} \ar[r]^-{\ilink{v}} \ar[d]_-{\ilink{u}} & \ilink{Y} \ar[d]^-{\ilink{j}} \\
\ilink{X} \ar[r]^-{\ilink{i}} & \ilink{W}
}
\end{aligned}
\label{M-square}
\end{gather}
is a homotopy pushout square of spaces for all $\mathbf{H}$.

Let $P(i,j)$ be the homotopy pullback of $i$ and $j$ defined in Definition \ref{def-pij}. Then by Lemma \ref{lem-commutes-hopb}, $\ilink{P(i,j)}$ is the homotopy pullback of $\ilink{i}$ and $\ilink{j}$.  Furthermore, consider the cartesian gap map $c: Z \to P(i,j)$, given by $c(z) = (u(z), const_{(i \circ u) (z)}, v(z))$, where we use the fact that $i \circ u = j \circ v$.
 Here $const_w$ is the path in $W$ given by $const_w(t) = w$.

Then $\ilink{c}: \ilink{Z} \to \ilink{P(i,j)}$ is the cartesian gap map of the square (\ref{M-square}) above.

By assumption, $\ilink{u}$ is $n_\mathbf{H}$-connected and $\ilink{v}$ is $m_\mathbf{H}$-connected. By the ordinary Blakers--Massey theorem, $\ilink{c}$ is therefore $(n_\mathbf{H} + m_\mathbf{H} - 1)$-connected. Therefore $c$ is $(n_\bullet + m_\bullet - 1)$-connected, as required.
\end{proof}

Analogous to the ordinary Blakers--Massey theorem, the isovariant version does not give the best estimate of the connectivity of the cartesian gap map in all cases, but does give the optimal connectivity in at least one case. 

\begin{ex}
For a finite dimensional real representation $V$, let $S(V)$ and $D(V)$ represent the unit sphere and unit disk, respectively. The one point compactification of $V$, denoted $S^V$, is also given by the equivariant quotient $S(V)_+ \to D(V)_+ \to S^V$. Isovariantly, the notion of a quotient does not make sense; any pushout involving the formal terminal object results in the formal terminal object. Thus we think of $S^V$ as given by the pushout (both equivariantly and isovariantly) of two copies of $S(V) \to D(V)$.
\begin{itemize}
    \item  Let $\sigma$ represent the sign representation of $C_2$. One can check that the isovariant pullback of 
    \[
    \xymatrix{& D(\sigma) \ar[d] \\ D(\sigma) \ar[r] & S^\sigma}
    \]
    is isovariantly equivalent to $S(\sigma)$, so the cartesian gap map $c$ is an equivalence. Note that the isovariant Blakers--Massey theorem only guarantees that it will be an equivalence on $\{e\}$-links (that is, that $\imap(\Delta^{\{e\}},c)$ is an equivalence), since $S(\sigma) \to D(\sigma)$ has $n_e = \infty$, $n_G = -1$, and $n_{e<G} = -1$.
    \item Let $\rho$ represent the regular representation of $C_2$. Then $S(\rho) \to D(\rho)$ has $n_e = \infty$, $n_G = 0$, and $n_{e<G} = 0$. The pushout of $S(\rho) \to D(\rho)$ along $S(\rho)\to D(\rho)$ is $S^\rho$, and the cartesian gap map has $n_e = \infty$, $n_G = -1$, and $n_{e < G} = -1$. In this example, the connectivity bound obtained using the isovariant Blakers--Massey theorem is the best possible.
    \item  One can verify that the cartesian gap map for $S(\sigma \oplus \sigma) \to D(\sigma \oplus \sigma)$ is an equivalence, even though $S(\sigma \oplus \sigma) \to D(\sigma \oplus \sigma)$ is not an equivalence.
\end{itemize}
   
\end{ex}

\subsection{A higher isovariant Blakers-Massey theorem}
The Blakers--Massey theorem was generalized to $n$-cubical diagrams by Ellis-Steiner and Goodwillie (\cite{ES}, \cite{G2}), with applications in Goodwillie's calculus of functors. We prove an isovariant version.

\begin{thm} \label{thm-higher-isvt-BM}
    Let $\mathcal{X}$ be a strongly cocartesian $n$-cube in $\itop$, with $n \geq 2$. Suppose $\mathcal{X}(\emptyset) \to \mathcal{X}(\{s\})$ is $(k_s)_\bullet$-connected, for each $s \in \underline{n}$. Then $\mathcal{X}$ is $k_\bullet$-cartesian, with $k_\bullet=1-n+\sum_s (k_s)_\bullet$. That is, the cartesian gap map $\mathcal{X}(\emptyset) \to \hl_{\mathcal{P}_0(\underline{n})} \mathcal{X}$ is $k_\bullet$-connected.
\end{thm}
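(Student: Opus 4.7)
The plan is to mirror the proof of Theorem \ref{thm-IBM} exactly, reducing the isovariant $n$-cubical statement to the classical $n$-cubical Blakers--Massey theorem of Ellis--Steiner and Goodwillie by applying the $\ilink{-}$ functor for each chain $\mathbf{H}$.

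First I would fix a strictly increasing chain of subgroups $\mathbf{H}=\{H_0<\cdots<H_m\}$ and form the $n$-cube of spaces $\ilink{\mathcal{X}}:\mathcal{P}(\underline{n}) \to \topp$. Since $\ilink{-}$ commutes with homotopy pushouts up to homotopy by Lemma 3.2 of \cite{sayisvtelm}, each $2$-dimensional face of $\ilink{\mathcal{X}}$ is a homotopy pushout square of spaces, so $\ilink{\mathcal{X}}$ is strongly cocartesian in $\topp$. The hypothesis that $\mathcal{X}(\emptyset)\to\mathcal{X}(\{s\})$ is $(k_s)_\bullet$-connected unpacks, by Definition \ref{def-isvtconn}, to say that $\ilink{\mathcal{X}(\emptyset)} \to \ilink{\mathcal{X}(\{s\})}$ is $(k_s)_\mathbf{H}$-connected for each $s \in \underline{n}$.

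Next I would apply the ordinary (non-equivariant) higher Blakers--Massey theorem of Ellis--Steiner/Goodwillie (\cite{ES}, \cite{G2}) to the strongly cocartesian $n$-cube $\ilink{\mathcal{X}}$ in $\topp$. This produces the bound that the cartesian gap map
\[
\ilink{\mathcal{X}(\emptyset)} \longrightarrow \hl_{\mathcal{P}_0(\underline{n})} \ilink{\mathcal{X}}
\]
is $k_\mathbf{H}$-connected, with $k_\mathbf{H} = 1 - n + \sum_s (k_s)_\mathbf{H}$. By Proposition \ref{prop-ilink-holim}, $\ilink{-}$ commutes with homotopy limits, so the target is naturally identified with $\ilink{\hl_{\mathcal{P}_0(\underline{n})} \mathcal{X}}$, and under this identification the above map is $\ilink{c}$, where $c: \mathcal{X}(\emptyset) \to \hl_{\mathcal{P}_0(\underline{n})} \mathcal{X}$ is the isovariant cartesian gap map. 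Since this holds for every chain $\mathbf{H}$, Definition \ref{def-isvtconn} yields that $c$ is $k_\bullet$-connected, as required.

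The only nontrivial point is verifying that the classical cartesian gap map for $\ilink{\mathcal{X}}$ is naturally identified with $\ilink{c}$; this is essentially formal from Proposition \ref{prop-ilink-holim} together with the compatibility of the gap map with the $\ilink{-}$ functor, since the gap map is the canonical comparison out of the initial vertex into the homotopy limit of the punctured cube. This compatibility follows from the same bookkeeping used in Proposition \ref{prop-commutes-hopb} and its corollary, extended over the indexing poset $\mathcal{P}_0(\underline{n})$. I would expect the main bookkeeping burden of the writeup to be making the strongly cocartesian condition transfer through $\ilink{-}$ face by face and stating the induced gap-map compatibility cleanly, but no genuinely new ideas beyond those used in Theorem \ref{thm-IBM} should be required.
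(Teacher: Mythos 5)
Your proposal is correct and follows essentially the same route as the paper's proof: apply $\ilink{-}$ for each chain $\mathbf{H}$, use that it commutes with homotopy pushouts to see $\ilink{\mathcal{X}}$ is strongly cocartesian, invoke the classical Ellis--Steiner/Goodwillie theorem, and use Proposition \ref{prop-ilink-holim} to identify the target of the gap map with $\ilink{\hl_{\mathcal{P}_0(\underline{n})}\mathcal{X}}$. Your extra remark on checking that the classical gap map agrees with $\ilink{c}$ is a point the paper leaves implicit, but it is formal exactly as you say.
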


\begin{proof}
    Let $\cX$ be a strongly cocartesian $n$-cube in $\itop$. Since $\ilink{-}$ commutes with homotopy pushouts up to homotopy, the $n$-cube $\ilink{\cX}$ is strongly cocartesian in $\topp$ for each strictly increasing chain of subgroups $\mathbf{H}$. Each initial map $\ilink{\cX(\emptyset) \to \cX(\{s\})}$ is $(k_s)_\mathbf{H}$-connected, so the higher Blakers--Massey theorem in $\topp$ (see, e.g., Theorem 2.3 of \cite{G2}) tells us that $\ilink{\cX}$ is $k_\mathbf{H}$-cartesian, where $k_\mathbf{H}=1-n+\sum_s (k_s)_\mathbf{H}$. That is, the cartesian gap map $\ilink{\cX(\emptyset)} \to \hl_{\cP_0(\underline{n})} \ilink{\cX}$ is $k_\mathbf{H}$-connected. By Proposition \ref{prop-ilink-holim}, the mapping space $\ilink{-}$ commutes with homotopy limits, so $\hl_{\cP_0(\underline{n})} \ilink{\cX} \simeq \ilink{\hl_{\cP_0(\underline{n})} \cX}$. Thus the map $\ilink{\cX(\emptyset)} \to \ilink{\hl_{\cP_0(\underline{n})} \cX}$ is $k_\mathbf{H}$-connected, so $\cX$ satisfies the definition of being $k_\bullet$-cartesian. 
\end{proof}

\section{Isovariant suspension}
In classical homotopy theory, the Blakers--Massey theorem is used to prove the Freudenthal suspension theorem. In this section, we will prove an isovariant Freudenthal suspension theorem. First, we will define isovariant suspension. In classical homotopy theory, the suspension of a space $X$ can be defined as a homotopy pushout of the diagram
$$\xymatrix{
X \ar[r] \ar[d] & {*} \\
{*}
}$$
We cannot mimic this definition directly in the isovariant category; the only space which receives a map from the formal terminal object is the formal terminal object itself, so the colimit of any pushout diagram involving the formal terminal object will be the formal terminal object.

\medskip

Instead, we will make use of a homotopy terminal object: an isovariant space which is isovariantly weakly equivalent to the formal terminal object. Our candidate for this homotopy terminal object will be a complete $G$-universe.

\medskip

\begin{defn}\label{def-complete-universe}
    Let $V_1, V_2, .., V_k$ be representatives for all the irreducible $G$-representations. A complete $G$-universe $\mathcal{U}$ is any $G$-representation isomorphic to a direct sum of countably many copies of each $V_i$:
$$\mathcal{U} \cong V_1^{\oplus \mathbb{N}} \oplus V_2^{\oplus \mathbb{N}} \oplus ... \oplus V_k^{\oplus \mathbb{N}}$$
If $\rho$ is the regular representation of $G$, $\mathcal{U}$ is also isomorphic to $\D{\rho ^{\oplus \mathbb{N}} = \mathop{\mathrm{colim}}\limits_{n \in \mathbb{N}} \rho^{\oplus n}}$. 
\end{defn}

\begin{rem} \label{Uisnice}
    As a sequential colimit of $G$-manifolds along smooth embeddings, $\mathcal{U}$ is built out of $\mathcal{I}$-cells, and therefore the map $\emptyset \to \mathcal{U}$ is in $\icell \subset \icof$. The map from $\mathcal{U}$ to the terminal object is in $\jinj$ (as in the representation case of Theorem 3.9 of \cite{KYfixed}). Thus $\mathcal{U}$ is both $\mathcal{J}$-fibrant and $\mathcal{I}$-cofibrant.
\end{rem}

In the next theorem, we will compare $\mathcal{U}$ to an $\mathcal{I}$-cofibrant and $\mathcal{J}$-fibrant isovariantly contractible space.

\begin{defn}
    Denote by $\mathcal{V}$ an ``$\mathcal{I}$-cofibrant replacement" of the formal terminal object. This is obtained by factoring the unique map from the empty set to the formal terminal object as a map in $\icof$ followed by a map in $\iinj$.
\end{defn}

\begin{rem} \label{Visnice}
The space $\mathcal{V}$ is isovariantly weakly contractible because 
the map from $\mathcal{V}$ to the terminal object is in $\iinj \subset \mathcal{W}$ (see Lemma 3.2 of \cite{KYfixed}). We note that $\mathcal{V}$ is also both $\mathcal{J}$-fibrant and $\mathcal{I}$-cofibrant, since $\iinj \subset \jinj$ (see the proof of Theorem 5.3 in \cite{KYfixed}).
\end{rem}

\begin{thm}\label{prop-h-terminal}
    A complete $G$-universe $\mathcal{U}$ is isovariantly weakly contractible. That is, for all chains of subgroups $\mathbf{H}$, $\ilink{\mathcal{U}}$ is weakly contractible.
\end{thm}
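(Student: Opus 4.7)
The plan is to establish a stronger statement: for any compact isovariant space $X$, any two isovariant maps $f_1, f_2 \colon X \to \mathcal{U}$ are isovariantly homotopic. Applied to $X = S^k \times \Delta^\mathbf{H}$ (with trivial $G$-action on $S^k$) and combined with the tensoring adjunction
\[
\imap(S^k \times \Delta^\mathbf{H}, \mathcal{U}) \;\cong\; \map(S^k, \ilink{\mathcal{U}})
\]
from Proposition~\ref{prop-cotensoring}, this will show that any two continuous maps $S^k \to \ilink{\mathcal{U}}$ are (unpointed) homotopic, yielding path-connectedness ($k=0$) and the vanishing of all higher homotopy groups of $\ilink{\mathcal{U}}$.

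The key ingredient is that a complete $G$-universe absorbs any finite-dimensional subrepresentation: since $\mathcal{U} \cong \rho^{\oplus \mathbb{N}}$, for any finite-dimensional $G$-subrepresentation $V \subseteq \mathcal{U}$ there is a $G$-equivariant splitting $\mathcal{U} \cong V \oplus V^\perp$ with $V^\perp \cong \mathcal{U}$, and $V^\perp$ can be further split as $W_1 \oplus W_2$ with each $W_i \cong \mathcal{U}$ via $G$-equivariant linear isomorphisms $\phi_i \colon \mathcal{U} \to W_i$. Given compact $X$ and isovariant $f_1, f_2 \colon X \to \mathcal{U}$, the images $f_1(X), f_2(X)$ lie in a common finite-dimensional $V \subseteq \mathcal{U}$ by the colimit topology on $\mathcal{U}$. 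I would then construct the isovariant homotopy from $f_1$ to $f_2$ as the concatenation of three straight-line homotopies,
\[
f_1 \;\simeq\; \phi_1 \circ f_1 \;\simeq\; \phi_2 \circ f_2 \;\simeq\; f_2,
\]
arranged so that at each stage the two endpoints land in complementary $G$-summands of $\mathcal{U}$ (respectively $V$ and $W_1$, then $W_1$ and $W_2$, then $W_2$ and $V$). For any such stage $H(x,s) = (1-s) g_1(x) + s g_2(x)$ with $g_1, g_2$ isovariant and landing in complementary summands, the isotropy of $H(x,s)$ for $s \in (0,1)$ equals $G_{g_1(x)} \cap G_{g_2(x)} = G_x \cap G_x = G_x$, using isovariance of each $g_i$ together with the fact that $\phi_i$ preserves isotropy (being a $G$-equivariant linear isomorphism onto its image); at $s=0,1$ isovariance of the endpoint is given. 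Non-emptiness of $\ilink{\mathcal{U}}$, required for the vanishing of $\pi_0$ to be meaningful, follows by constructing an explicit isovariant embedding of $\Delta^\mathbf{H}$ into $\mathcal{U}$ using vectors $u_0, \dots, u_n \in \mathcal{U}$ with isotropies exactly $H_n, H_{n-1}, \dots, H_0$ and extending equivariantly.

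The main obstacle is the isotropy bookkeeping: a naive straight-line homotopy between two arbitrary isovariant maps into $\mathcal{U}$ generally fails to be isovariant, since intermediate points can acquire strictly larger isotropy through cancellation in a common summand. The purpose of the detour through $\phi_1 \circ f_1$ and $\phi_2 \circ f_2$ is precisely to force the endpoints of each sub-homotopy into genuinely complementary $G$-summands, so that the isotropy of the convex combination splits cleanly as an intersection. Once the homotopy statement is established, the deduction of weak contractibility is formal: the tensoring adjunction converts ``all isovariant maps $S^k \times \Delta^\mathbf{H} \to \mathcal{U}$ are homotopic'' into ``all unbased maps $S^k \to \ilink{\mathcal{U}}$ are homotopic,'' and this forces $\pi_k(\ilink{\mathcal{U}}, y_0) = 0$ for all $k$ and $y_0$, since the $\pi_1$-action on $\pi_k$ fixes $0$, so a single orbit must be $\{0\}$.
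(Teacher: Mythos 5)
Your argument is correct, and it takes a genuinely different route from the proof given in the paper. The paper first reduces to $0$- and $1$-dimensional links using the weak model structure (fibrancy and cofibrancy of $\mathcal{U}$ and of the cofibrant replacement $\mathcal{V}$ of the terminal object, via Proposition 3.12 of \cite{KYfixed}), quotes Proposition 4.6 of \cite{MMhcob} for the $0$-links, and then builds an explicit null-homotopy of a sphere in $\imap(\Delta^{H<K},\mathcal{U})$ by coning into a fresh copy of $\rho$ along the path $\gamma$ realizing the chain $H<K$. You instead run an Eilenberg-swindle-type argument uniformly on links of arbitrary length: any two isovariant maps from a compact space into $\mathcal{U}$ land in a common finite stage $\rho^{\oplus l}$ and become isovariantly homotopic after shifting into complementary $G$-summands, because the isotropy group of a point of a direct sum of $G$-invariant subspaces is the intersection of the isotropy groups of its components, nonzero scaling preserves isotropy, and equivariant injections are isovariant. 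Combined with the cotensoring adjunction applied to $S^k\times\Delta^{\mathbf H}$ and the orbit argument identifying free homotopy classes with $\pi_k/\pi_1$, this yields weak contractibility of every $\ilink{\mathcal{U}}$ at once, with no reduction to low-dimensional links and no appeal to \cite{MMhcob}; the paper's approach, in exchange, only has to produce one completely explicit formula (the lift $\tilde f$) in the lowest nontrivial case.

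The one place your sketch needs more care is the non-emptiness of $\ilink{\mathcal{U}}$, which your strategy genuinely requires for every chain $\mathbf{H}$ (the paper's Corollary \ref{cor-map-to-complete} cannot be used here, as it presupposes the theorem). It is not enough to choose vectors $u_0,\dots,u_n$ whose individual isotropy groups are exactly $H_n,\dots,H_0$: a convex combination of such vectors can a priori acquire extra symmetry through cancellation, so the resulting map on $\Delta^n$ need not have the isotropy profile of $\Delta^{\mathbf H}$. The specific choice $u_i=\sum_{h\in H_{n-i}}\chi_h$ repairs this: for a point $t=(t_0,\dots,t_{n-k},0,\dots,0)$ with $t_{n-k}\neq 0$, the coefficient of $\chi_e$ in $\sum_i t_iu_i$ is $1$, while for $g\notin H_k$ the coefficient of $\chi_g$ is at most $1-t_{n-k}<1$, so $g$ does not fix the sum; hence the isotropy is exactly $H_k$ and $\langle g,t\rangle\mapsto g\cdot\sum_i t_iu_i$ is a well-defined isovariant map $\Delta^{\mathbf H}\to\rho\subseteq\mathcal{U}$. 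This is the same coefficient-counting argument the paper uses for $\gamma$ in the length-one case, so the gap is routine to fill, but it should be filled.
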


\begin{proof} 
   
    By Remarks \ref{Uisnice} and \ref{Visnice} above, there is a lift of $\emptyset \to \mathcal{V}$ to $\mathcal{U} \to \mathcal{V}$; we claim that it is an isovariant weak equivalence. 
   
    As in Proposition 3.12 of \cite{KYfixed}, since both $\mathcal{U}$ and $\mathcal{V}$ are $\mathcal{J}$-fibrant and $\mathcal{I}$-cofibrant, it's enough to show that the map $\mathcal{U} \to \mathcal{V}$ induces weak equivalences on 0 and 1 dimensional links. Since $\mathcal{V}$ is isovariantly weakly contractible, we need to show that:
    \begin{enumerate}
        \item $\mathcal{U}_H$ is weakly  contractible for all subgroups $H \leq G$, and
        \item $\imap(\Delta^{H < K}, \mathcal{U})$ is weakly contractible for all subgroups $H < K \leq G$, where $$\imap(\Delta^{H<K}, \mathcal{U}) = \{ \gamma: [0,1] \to \mathcal{U} : \gamma(0) \in \mathcal{U}_K, \gamma(s) \in \mathcal{U}_H  \forall s>0 \}.$$
    \end{enumerate}
    The first follows from Proposition 4.6 of \cite{MMhcob} with $X=\ast$. For the second, we will follow the proof strategy of Proposition 4.6 of \cite{MMhcob}, by constructing an explicit trivializing extension of any map of a sphere into $\imap(\Delta^{H<K},\mathcal{U})$. Denote by $\rho$ the regular representation of $G$, with basis vectors $\chi_g$ for $g \in G$, satisfying $g'\chi_g = \chi_{g'g}$. For every subgroup $H \leq G$,  the element $\sum\limits_{h \in H} \chi_h$ is fixed by exactly $H$; the sum is rearranged when acted on by $g \in H$, but action by $g \not \in H$ introduces a summand $\chi_{gh}$, which is not in the original sum.
     
    For any subgroups $H<K$ of $G$, we can define a path $\gamma:[0,1] \to \rho$ by
    \[
    \gamma(s)= s\sum\limits_{h \in H} \chi_h + (1-s) \sum\limits_{k \in K} \chi_k.
    \] 
    By the preceeding argument, $\gamma(0) \in \rho_K$, and we will show that when $s > 0$, $\gamma(s) \in \rho_H$. Action by $g \in H$ fixes both sums since $H<K$. We can rewrite $\gamma(s)=\sum\limits_{h \in H} \chi_h + (1-s) \sum\limits_{k \in K-H}  \chi_k$.
    If $g \in K-H$, action by $g$ fixes the second sum but not the first. If $g \notin K$, the action produces a linear combination of $\chi_{g'}$ for $g' \notin K$, thus $\gamma(s)$ is not fixed by $g \notin H$.
    Using the $G$-action on $\rho$, one can extend $\gamma$ to an element of $\imap(\Delta^{H<K}, \rho)$. 

    \medskip

    We will show that for all $H <K$ and every $n$, there is a lift in every diagram
$$\xymatrix{
S^{n-1} \ar[r]^-f \ar[d] & \imap(\Delta^{H<K}, \mathcal{U}) \\
D^n 
}$$
Recall that $\D{\mathcal{U} = \col \limits_{l \in \mathbb{N}} \rho^{ \oplus l}}$, and therefore for every $l$, we have maps $\imap(\Delta^{H<K}, \rho^{\oplus l}) \to \imap(\Delta^{H<K}, \rho^{\oplus l+1}) \to \imap(\Delta^{H<K}, \mathcal{U})$, induced by the inclusions $\rho^{\oplus l} \subset \rho^{\oplus l+1} \subset \mathcal{U}$. Recall that
$$\imap(\Delta^{H<K}, \mathcal{U}) = \{ \gamma: [0,1] \to \mathcal{U} : \gamma(0) \in \mathcal{U}_K, \gamma(s) \in \mathcal{U}_H  \forall s>0 \}.$$
Adjointing, a map $S^{n-1} \to \imap(\Delta^{H<K}, \mathcal{U})$ gives a map $S^{n-1} \times [0,1] \to \mathcal{U}$ satisfying certain conditions (using a fundamental domain for $\Delta^{H<K}$). Since $S^{n-1} \times [0,1]$ is compact, any such map factors through a finite stage of the colimit, which still satisfies the same conditions making the adjoint map isovariant. Therefore $f$ factors as a map $S^{n-1} \to \imap(\Delta^{H<K}, \rho^{\oplus l})$ for some $l \in \mathbb{N}$. We will show that there is a lift $D^n \to \imap(\Delta^{H<K}, \rho^{\oplus l+1})$ in the diagram
$$\xymatrix{
S^{n-1} \ar[r]^-f \ar[d] & \ar[r]  \imap(\Delta^{H<K}, \rho^{ \oplus l})  \ar[r]  &\imap(\Delta^{H<K}, \rho^{\oplus l + 1}) \\
D^n
}$$
and composing with the map $\imap(\Delta^{H<K},\rho^{\oplus l+1}) \to \imap(\Delta^{H<K}, \mathcal{U})$ induced by the inclusion, this will give a lift of $f: S^{n-1} \to \imap(\Delta^{H<K}, \mathcal{U})$ to a map $D^n \to \imap(\Delta^{H<K}, \mathcal{U})$.

\medskip

Denote $V = \rho^{\oplus l}$, so that $\rho^{\oplus l+1} = V \times \rho$. 

For $y \in D^n$, denote $t_y = 1 - ||y||$. If $y \neq 0$, denote $y_0 = \frac{y}{||y||}$, and note that for $y \in S^{n-1}$, $y_0 = y$. Now we define a map $\tilde{f}: D^n \times [0,1] \to V \times \rho$ and check that its adjoint produces the desired lift $\tilde{f}:D^n \to \imap(\Delta^{H<K}, V \times \rho)$. Let

\[
\tilde{f}(y)(s)= \begin{cases}
    (f(y_0)(s), 2t_y \gamma(s) ) & \text{ if } t_y \leq 1/2\\
    ( (2-2t_y)f(y_0)(s), \gamma(s) ) & \text{ if } 1/2 \leq t_y < 1\\
    (0, \gamma(s)) & \text{ if } t_y=1 \text{ i.e. } y=0
\end{cases}
\]

\medskip

We now check that this satisfies all of the conditions we need. For $G$-spaces $X$ and $Y$, $X_H \times Y^H \subseteq (X \times Y)_H$.

\begin{itemize}
    \item This is well-defined and continuous. If $t_y = 1/2$, the first and the second expressions above agree. As $t_y \to 1$ (i.e. as $y \to 0$), $(2-2t_y)f(y_0)(s) \to 0$, so the second expression approaches the third expression.
    \item This extends $f$. If $y \in S^{n-1}$, then $t_y = 0$ and $y_0 = y$, so that we get $\tilde{f}(y)(s) = (f(y)(s), 0)$.
    \item For $s=0$, $\tilde{f}(y)(s) \in (V \times \rho)_K$. Because if $s=0$ and $t_y \leq 1/2$, the first coordinate of $\tilde{f}(y)(0)$ is $ f(y_0)(0) \in V_K$, and the second coordinate $2t_y \sum\limits_{k \in K} \chi_k$ is certainly fixed by $K$. Similarly for $1/2 \leq t_y <1$. If $t_y = 1$,  the second coordinate of $\tilde{f}(y)(0)$ is $\sum\limits_{k \in K} \chi_k \in \rho_K$, and the first coordinate 0 is certainly fixed by $K$.
    \item For $s >0$, $\tilde{f}(y)(s) \in (V \times \rho)_H$. Because if $t_y < 1$, the first coordinate is in $V_H$ by assumption on $f$, and the second coordinate is certainly fixed by $H$. If $t_y =1$, the second coordinate is in $\rho_H$, and the first coordinate 0 is certainly fixed by $H$.
\end{itemize}

Therefore we have defined a lift, so that $\mathcal{U}$ is isovariantly weakly contractible, as required.
\end{proof}

By the isovariant Whitehead theorem (Theorem 3.10 of \cite{KYfixed}), $\mathcal{U}$ and $\mathcal{V}$ are isovariantly homotopy equivalent, not just isovariantly weakly equivalent. We obtain:

\begin{cor}\label{cor-map-to-complete}
    Suppose that $X$ is $\mathcal{I}$-cofibrant. Then there is an isovariant map $X \to \mathcal{U}$, unique up to isovariant homotopy.
\end{cor}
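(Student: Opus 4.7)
The plan is to separately establish existence and uniqueness up to isovariant homotopy of a map $X \to \mathcal{U}$, both via the intermediate space $\mathcal{V}$, leveraging the isovariant homotopy equivalence $\phi \colon \mathcal{U} \to \mathcal{V}$ produced in the proof of Theorem~\ref{prop-h-terminal}.

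For \emph{existence}: since $\mathcal{V} \to *$ lies in $\iinj$ by the construction of $\mathcal{V}$, and $\emptyset \to X$ lies in $\icof$ by hypothesis, the lifting property produces an isovariant map $g \colon X \to \mathcal{V}$. By Remarks~\ref{Uisnice} and~\ref{Visnice}, both $\mathcal{U}$ and $\mathcal{V}$ are simultaneously $\mathcal{I}$-cofibrant and $\mathcal{J}$-fibrant, so the isovariant weak equivalence $\phi$ is an isovariant homotopy equivalence by the isovariant Whitehead theorem (Theorem~\ref{thm-isvt-whitehead}). Choosing an isovariant homotopy inverse $r \colon \mathcal{V} \to \mathcal{U}$, the composite $r \circ g \colon X \to \mathcal{U}$ is the desired map.

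For \emph{uniqueness}: given isovariant maps $f_0, f_1 \colon X \to \mathcal{U}$, the strategy is to build an isovariant homotopy inside $\mathcal{V}$ and then transfer it back to $\mathcal{U}$ via $r$. The key technical input is that the inclusion $j \colon X \sqcup X \to X \times I$ (where $I = [0,1]$ carries the trivial $G$-action, and the product is formed using the tensoring $\topp \times \itop \to \itop$) lies in $\icof$; this is the pushout-product of the topological cofibration $S^0 \to D^1$ with $\emptyset \to X \in \icof$, using the pushout-product formalism reviewed before Lemma~\ref{lemma:popr-adj-pbhom}. Lifting $\phi \circ (f_0 \sqcup f_1) \colon X \sqcup X \to \mathcal{V}$ along $j$ against the $\iinj$-map $\mathcal{V} \to *$ produces an isovariant homotopy $\bar H \colon X \times I \to \mathcal{V}$ from $\phi \circ f_0$ to $\phi \circ f_1$. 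Applying $r$ yields $r \circ \bar H \colon X \times I \to \mathcal{U}$, an isovariant homotopy from $r \phi f_0$ to $r \phi f_1$. Concatenating with isovariant homotopies $r \phi f_i \simeq f_i$ provided by the homotopy-inverse structure of $r$ shows $f_0 \simeq f_1$ isovariantly.

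The main obstacle is verifying that $j \in \icof$: one must confirm the pushout-product axiom for the tensoring $\topp \times \itop \to \itop$, i.e.\ that the pushout-product of a cofibration in $\topp$ with a map in $\icof$ lies in $\icof$. This should follow by reducing to generators (via the small object argument and stability of $\icof$ under transfinite composition, coproducts, and pushouts) and checking on the generators $s_{n-1}$ of cofibrations in $\topp$ against the generators $s_{m-1} \Box b^{\mathbf{H}}$ of $\mathcal{I}$, where the resulting pushout-product is again of the form $s_{k-1} \Box b^{\mathbf{H}}$ up to a standard identification $D^n \times D^m \cong D^{n+m}$ relative to boundaries.
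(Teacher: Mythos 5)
Your proof is correct, and its technical core coincides with the paper's: both arguments hinge on the observation that $X \amalg X \to X \times I$ is the pushout-product of $s_0 \colon S^0 \to D^1$ with $\emptyset \to X \in \icof$, and that this lies in $\icof$ because on generators $s_{n-1} \Box b^{\mathbf{H}} \Box s_0 \cong s_n \Box b^{\mathbf{H}}$ is again a generator. The difference is where you solve the lifting problems. The paper lifts directly against $\mathcal{U} \to \ast$, using that this map lies in $\jinj \cap \mathcal{W} \subseteq \iinj$ (Remark \ref{Uisnice} plus Theorem \ref{prop-h-terminal} plus Lemma 3.4 of \cite{KYfixed}), so both existence and uniqueness come from a single lifting square each, with no post-processing. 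You instead lift against $\mathcal{V} \to \ast$ (which is in $\iinj$ by construction) and then transport the result back to $\mathcal{U}$ via a homotopy inverse $r$ of $\mathcal{U} \to \mathcal{V}$, which costs you an invocation of the isovariant Whitehead theorem, a choice of $r$, and a concatenation of homotopies $r\phi f_i \simeq f_i$ at the end. This detour is harmless --- all the homotopies involved are isovariant since they are composites of isovariant maps with the trivial action on $I$ --- but it is strictly more machinery; what it buys you is that you never need the implication $\jinj \cap \mathcal{W} \subseteq \iinj$, only the defining factorization of $\mathcal{V}$. One small point worth tightening: your closing paragraph gestures at a full pushout-product axiom for the tensoring $\topp \times \itop \to \itop$, but you only need the single case $(\emptyset \to X) \Box s_0$, for which the retract-of-relative-cell-complex argument on generators you sketch is exactly what the paper records, so no further verification is required.
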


\begin{proof}
     
    Since $\emptyset \to X \in \icof$ and the map from $\mathcal{U}$ to the terminal object is in $\jinj \cap \mathcal{W}\subseteq \iinj$ (see Lemma 3.4 of \cite{KYfixed}), we indeed get a lift $f: X \to \mathcal{U}$. Now suppose that $f': X \to \mathcal{U}$ is any other isovariant map. We want to show that there is an isovariant lift in the diagram
    $$\xymatrix{
    X \amalg X \ar[r]^-{f \amalg f'} \ar[d] & \mathcal{U} \\
    X \times [0,1]
    }$$
    We note that if a map $\phi\in \icof$, then $\phi \Box s_0 \in \icof$, since $s_k \Box s_0 \cong s_{k+1}$.
    
    Since $\emptyset \to X \in \icof$, the map $X \amalg X \to X \times [0,1]$ is in $\icof$ as well.  The map from $\mathcal{U}$ to the terminal object is in $\iinj$, and therefore there is an isovariant lift in the diagram, as required. 
\end{proof}

The following measures the connectivity of maps to $\mathcal{U}$.

\begin{prop}\label{prop-conn-to-terminal}
    Suppose that $X$ is isovariantly $n_\bullet$-connected (that is, every $\ilink{X}$ is $n_{\mathbf{H}}$-connected.) Then any isovariant map $\iota: X \to \mathcal{U}$ is $(n_\bullet + 1)$-connected.
\end{prop}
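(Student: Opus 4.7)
The plan is to reduce the statement, chain by chain, to a purely topological fact: a map into a contractible space is as highly connected as the source is, plus one. Fix a strictly increasing chain $\mathbf{H}$ of subgroups of $G$; it suffices by Definition \ref{def-isvtconn} to show that $\ilink{\iota} \colon \ilink{X} \to \ilink{\mathcal{U}}$ is $(n_\mathbf{H}+1)$-connected.

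First I would invoke Theorem \ref{prop-h-terminal} to conclude that $\ilink{\mathcal{U}}$ is weakly contractible. Then the homotopy fiber of $\ilink{\iota}$ over any point of $\ilink{\mathcal{U}}$ is weakly equivalent to $\ilink{X}$ itself. (If $\ilink{X}$ is empty there is nothing to check, since then $n_\mathbf{H}=-2$ and any map is $(-1)$-connected; otherwise $\ilink{\mathcal{U}}$ is nonempty and path-connected, so all homotopy fibers agree up to weak equivalence.) Concretely, the long exact sequence of the homotopy fiber sequence
\[
F \longrightarrow \ilink{X} \xrightarrow{\ilink{\iota}} \ilink{\mathcal{U}}
\]
together with contractibility of $\ilink{\mathcal{U}}$ gives $\pi_i F \cong \pi_i \ilink{X}$ for all $i$.

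Next I would combine this with the hypothesis that $\ilink{X}$ is $n_\mathbf{H}$-connected, yielding that $F$ is $n_\mathbf{H}$-connected. Applying the criterion in the definition of connectivity for maps recalled at the end of Section 2 (a map is $k$-connected iff its homotopy fiber is $(k-1)$-connected), this says exactly that $\ilink{\iota}$ is $(n_\mathbf{H}+1)$-connected, as desired. Since the chain $\mathbf{H}$ was arbitrary, $\iota$ is $(n_\bullet+1)$-connected.

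The argument is essentially immediate once Theorem \ref{prop-h-terminal} is in hand; there is no real obstacle beyond packaging the classical fact that a map to a contractible target is detected entirely by the connectivity of its source, so I do not foresee any difficulty in writing the proof in a line or two.
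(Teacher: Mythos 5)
Your argument is correct and matches the paper's proof in essence: both reduce to the fact (Theorem \ref{prop-h-terminal}) that each $\ilink{\mathcal{U}}$ is weakly contractible, and then observe that a map from an $n_{\mathbf{H}}$-connected space to a weakly contractible one is $(n_{\mathbf{H}}+1)$-connected; the paper checks this directly on homotopy groups while you route it through the homotopy fiber, but these are the two equivalent characterizations of connectivity recalled at the end of Section 2.
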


\begin{proof}
    By Theorem \ref{prop-h-terminal}, $\ilink{\mathcal{U}}$ is weakly contractible for all $\mathbf{H}$. If $\pi_i(\ilink{X}) = 0$ for $i \leq n_\mathbf{H}$, then $\ilink{\iota}: \ilink{X} \to \ilink{\mathcal{U}}$ is therefore an isomorphism on $\pi_i$ for $i \leq n_\mathbf{H}$ and surjective on $\pi_i$ for $i = n_\mathbf{H} + 1$.
\end{proof}

\begin{defn}
    Let $X$ be $\mathcal{I}$-cofibrant, and let $\iota: X \to \mathcal{U}$ be an isovariant map. We will call the homotopy pushout $\mathcal{U} \cup_\iota (X \times [0,1]) \cup_\iota \mathcal{U}$ an isovariant suspension of $X$, and denote it $\isusp X$. This is isovariantly homeomorphic to the pushout $C_\mathcal{U}X  \cup_X  C_\mathcal{U} X $, where $C_\mathcal{U} X= \mathcal{U} \cup_\iota (X \times [0,1])$ is the mapping cone of $\iota$.
\end{defn}

Note that this is an unreduced suspension. Denote the unreduced suspension in topological spaces by $S$. Since $\ilink{-}$ commutes with homotopy pushouts up to homotopy, we have $\ilink{\isusp X} \simeq S \ilink{X}$.
We also note that the isovariant suspension is defined only for $\mathcal{I}$-cofibrant objects, or more generally for $G$-spaces equipped with an isovariant map to $\mathcal{U}$.

\begin{defn}
    Let $Y$ be a $G$-space with isovariant maps $j_0, j_1: \mathcal{U} \to Y$ (e.g. $Y = \isusp X$.) We will call the homotopy pullback $\mathcal{U} \times_{Y}  Y^{[0,1]} \times_Y \mathcal{U}$ of the diagram
$$\xymatrix{
\iloop Y \ar[r] \ar[d] & \mathcal{U} \ar[d]^-{j_0} \\
\mathcal{U} \ar[r]^-{j_1} & Y
}$$
the isovariant loop space of $Y$ and denote it $\iloop Y$. When $j_0$ and $j_1$ are isovariantly homotopic, we have $\ilink{\iloop Y} \simeq \Omega \ilink{Y}$.
\end{defn}

\begin{rem}\label{rmk-cone-hopb}
    Note that when $Y = \isusp X$, the loop space $\iloop \isusp X$ is isovariantly homotopy equivalent to the homotopy pullback of
    $$\xymatrix{
 & C_\mathcal{U} X \ar[d] \\
C_\mathcal{U} X \ar[r] & \isusp X
}$$
This is because the inclusion $\mathcal U \to C_\mathcal{U} X$ is an isovariant homotopy equivalence.
\end{rem}

Since $\ilink{-}$ commutes with homotopy pullbacks, we have $\ilink{\iloop \isusp X} \simeq \Omega_{s_0, s_1} S\ilink{X}$. Here $\Omega_{s_0, s_1}$ denotes paths from one suspension point to the other in the unreduced suspension.

\medskip

We are now ready to prove an isovariant Freudenthal suspension theorem.

\begin{cor}\label{thm-isvt-Freudenthal}
    Let $X$ be isovariantly $n_\bullet$-connected and $\mathcal{I}$-cofibrant. Then the cartesian gap map $c: X \to \iloop \isusp X$ is isovariantly $(2n_\bullet + 1)$-connected.
\end{cor}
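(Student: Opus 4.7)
The plan is to deduce this from the isovariant Blakers--Massey theorem (Theorem \ref{thm-IBM}) applied to the defining homotopy pushout square of $\isusp X$, combined with the connectivity estimate for maps into $\mathcal{U}$ from Proposition \ref{prop-conn-to-terminal}.

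First, I would recall the defining square of the isovariant suspension. By construction, $\isusp X$ is the homotopy pushout of
\[
\xymatrix{
X \ar[r]^-{\iota} \ar[d]_-{\iota} & \mathcal{U} \ar[d]^-{j_1} \\
\mathcal{U} \ar[r]^-{j_0} & \isusp X
}
\]
where $\iota: X \to \mathcal{U}$ is the (homotopically unique) isovariant map supplied by Corollary \ref{cor-map-to-complete}. By the definition of $\iloop \isusp X$ as the homotopy pullback of $j_0$ and $j_1$, the cartesian gap map of this square is precisely the map $c: X \to \iloop \isusp X$ whose connectivity we wish to estimate.

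Next, I would upgrade the hypothesis that $X$ is isovariantly $n_\bullet$-connected into a connectivity estimate for the two maps into the pushout. By Proposition \ref{prop-conn-to-terminal}, since $X$ is $n_\bullet$-connected and $\mathcal{U}$ is isovariantly weakly contractible (Theorem \ref{prop-h-terminal}), the isovariant map $\iota: X \to \mathcal{U}$ is $(n_\bullet + 1)$-connected. Thus both the horizontal and vertical maps $X \to \mathcal{U}$ in the pushout square above are $(n_\bullet + 1)$-connected.

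Finally, I would apply the isovariant Blakers--Massey theorem (Theorem \ref{thm-IBM}) to this homotopy pushout square. Taking $u = v = \iota$, with $n_\bullet$ and $m_\bullet$ both equal to $n_\bullet + 1$, Theorem \ref{thm-IBM} gives that the cartesian gap map $c: X \to \iloop \isusp X$ is $\bigl((n_\bullet + 1) + (n_\bullet + 1) - 1\bigr)$-connected, i.e.\ $(2n_\bullet + 1)$-connected, which is exactly the desired conclusion.

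There is essentially no obstacle here: all the work has been packaged into the earlier results. The only thing to be careful about is that Proposition \ref{prop-conn-to-terminal} and Theorem \ref{thm-IBM} both take place chain-by-chain in $\mathbf{H}$, so the arithmetic $(n_\mathbf{H} + 1) + (n_\mathbf{H} + 1) - 1 = 2n_\mathbf{H} + 1$ is applied uniformly across all strictly increasing chains $\mathbf{H}$, matching the definition of isovariant $(2n_\bullet + 1)$-connectivity.
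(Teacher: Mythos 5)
Your proposal is correct and follows exactly the paper's own argument: identify $c$ as the cartesian gap map of the defining pushout square of $\isusp X$, use Proposition \ref{prop-conn-to-terminal} to see both legs $\iota: X \to \mathcal{U}$ are $(n_\bullet+1)$-connected, and apply Theorem \ref{thm-IBM} to get $(2n_\bullet+1)$-connectivity. Your closing remark about the argument being applied uniformly over all chains $\mathbf{H}$ is a sensible extra check, but otherwise there is nothing to add.
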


\begin{proof}
    By Proposition \ref{prop-conn-to-terminal}, any isovariant map $\iota: X \to \mathcal{U}$ is $(n_\bullet + 1)$-connected. The map $c$ is the cartesian gap map of the homotopy pushout square
    $$\xymatrix{
    X \ar[r] \ar[d] & C_\mathcal{U} X \ar[d] \\
    C_\mathcal{U} X \ar[r] & \isusp X
    }$$
    so, by Theorem \ref{thm-IBM}, $c$ is $(n_\bullet + 1) + (n_\bullet + 1) - 1 = (2n_\bullet + 1)$-connected.
\end{proof}

\begin{rem}
The isovariant suspension $S_\mathcal{U}$ corresponds to the suspension of a $G$-space by a trivial 1-dimensional representation. In future work, we will explore the possibility of suspending by other representations, as in the equivariant Freudenthal suspension theorem (see, e.g., Theorem XI.4.5 of \cite{alaskanotes}.)
\end{rem}

\begin{ex}
    Let $\imap(\Delta^{\mathbf{H}},X)$ be denoted $X_{\mathbf{H}}$ in this example.
\begin{itemize}
    \item Let $\sigma$ denote the 1-dimensional sign representation of $C_2$. Then $\sigma_{\{e\}} = \mathbb{R} - \{0\}$, $\sigma_{C_2} = \{0\}$, and $\sigma_{\{e \}<C_2} \simeq \mathbb{R} - \{ 0 \}$. Therefore $((\isusp)^k \sigma)_{\{e\}} \simeq ((\isusp)^k
    \sigma)_{\{e\}<C_2} \simeq  S^k$, and $((\isusp)^k \sigma)_{C_2} \simeq *$. 
    
    \item Taking the 1-point compactification of the previous example, $S^\sigma$ satisfies $(S^\sigma)_{\{e\}} = \mathbb{R} - \{0\}$ and $(S^\sigma)_{C_2} \cong S^0$. For $(S^\sigma)_{\{e \}<C_2}$, we note that this is a punctured tubular neighborhood of $(S^\sigma)_{C_2} = S^0$ inside $S^\sigma$, and therefore $(S^\sigma)_{\{e \}<C_2} \simeq  \coprod_4 \mathbb{R}$. Therefore $((\isusp)^k S^\sigma)_{\{e\}} \simeq S^k$, $((\isusp)^k S^\sigma)_{\{e\}<C_2} \simeq  \bigvee_3 S^k$, and $((\isusp)^k S^\sigma)_{C_2} \simeq S^k$.
    \item Let $\rho$ denote the regular representation of $C_3$. Then $\rho \cong \mathbb{R} \oplus \lambda$, where $\lambda$ denotes the 2-dimensional rotation by $2\pi/3$ representation. Taking 1-point compactification, $S^\rho_{C_3} = S^1$, and $S^\rho_{\{e\}} = S^3 - S^1 = \mathbb{R}^3 - (\mathbb{R} \times \{0\}) \simeq S^1$. 
     Also, $S^\rho_{\{e\} < C_3}$ is equivalent to a punctured tubular neighborhood of $S^1$ in $S^3$. A tubular neighborhood of this $S^1$ in $S^3$ is homeomorphic to $S^1 \times D^2$, so removing $S^1 \times \{ 0 \}$ from it gives $S^\rho_{\{e\} < C_3} \simeq S^1 \times S^1$. Thus $((\isusp)^n S^\rho)_{C_3} \simeq S^{n+1}$, $((\isusp)^n S^\rho)_{\{e \}} \simeq S^{n+1}$, and for $n \geq 1$, $((\isusp)^n S^\rho)_{\{e \} < C_3} \simeq S^{n+1} \bigvee S^{n+1} \bigvee S^{n+2}$. This is because $\Sigma(S^1 \times S^1)$ splits as $ S^2 \bigvee S^2 \bigvee S^3$.
\end{itemize}
\end{ex}

\begin{rem}
    Note that it is not generally true that $(\isusp)^n S^V$ is isovariantly weakly equivalent to $S^{n+V}$ (although they are equivariantly weakly equivalent). For example, take $V = \sigma$, the sign representation of $C_2$, and $n=1$. Then $(\isusp S^\sigma)_e \simeq S(S^\sigma)_e \simeq S^1$, whereas $(S^{1 + \sigma})_e = S^{1+ \sigma} - S^1 \simeq S^0$. Therefore $\isusp S^\sigma$ and $S^{1 + \sigma}$ cannot be isovariantly weakly equivalent.
\end{rem}

\bibliography{isvtbmbib}{}

\end{document}